\DeclareMathOperator{\fgmod}{mod}
\DeclareMathOperator{\fd}{fd}
\DeclareMathOperator{\CM}{CM}
\DeclareMathOperator{\Sub}{Sub}
\DeclareMathOperator{\GP}{GP}
\newcommand{\add}{\operatorname{add}}
\newcommand{\bigdsum}{\bigoplus}
\newcommand{\card}[1]{\lvert #1 \rvert}
\let\chisave\chi
\renewcommand{\chi}{{%
 \mathchoice{\raisebox{0.25ex}{$\displaystyle\chisave$}}
            {\raisebox{0.2ex}{$\textstyle\chisave$}}
            {\raisebox{0.2ex}{$\scriptstyle\chisave$}}
            {\raisebox{0.1ex}{$\scriptscriptstyle\chisave$}}}}
\newcommand{\complex}{\ensuremath \mathbb{C}}
\newcommand{\cross}{\times}
\newcommand{\curly}[1]{\ensuremath{\mathcal{#1}}}
\newcommand{\defeq}{:=}%{\stackrel{\scriptscriptstyle{\mathrm{def}}}{=}}
\let\degsave\deg
\renewcommand{\deg}{\underline{\degsave}}
\newcommand{\dimvec}{\ensuremath \mbox{\underline{dim}}}
\newcommand{\dsum}{\ensuremath{ \oplus}}
\newcommand{\End}[2]{\ensuremath \operatorname{End}_{#1}(#2)}
\renewcommand{\epsilon}{\varepsilon}
\newcommand{\Ext}[4]{\ensuremath \operatorname{Ext}^{#1}_{#2}(#3,#4)}
\newcommand{\Hom}[3]{\ensuremath \operatorname{Hom}_{#1}(#2,#3)}
\newcommand{\ind}[2]{\text{\underline{ind}}_{#1}(#2)}
\newcommand{\integ}{\ensuremath{\mathbb{Z}}}
\newcommand{\ip}[2]{\ensuremath \lgen\;\!#1,#2\;\!\rgen}
\newcommand{\iso}{\ensuremath \cong}
\newcommand{\lgen}{\ensuremath \mathopen{<}} %see also \rgen
\newcommand{\op}[1]{{#1}^{\mbox{\scriptsize \textup{op}}}}
\renewcommand{\phi}{\varphi}
\newcommand{\rad}{\mathop{\mathrm{rad}}}
\newcommand{\rgen}{\ensuremath \mathclose{>}}
\renewcommand{\leq}{\leqslant}
\renewcommand{\geq}{\geqslant}
\newcommand{\powser}[2]{#1[\hspace{-0.1em}[#2]\hspace{-0.1em}]}
\newcommand{\abgp}{\mathbb{A}}
\newcommand{\cf}{cf.\ }
\theoremstyle{plain}
\newtheorem{theorem}{Theorem}[section]
\newtheorem*{theorem*}{Theorem}
\newtheorem{proposition}[theorem]{Proposition}
\newtheorem{lemma}[theorem]{Lemma}
\theoremstyle{definition}
\newtheorem{definition}[theorem]{Definition}
\newtheorem*{question*}{Question}
\theoremstyle{remark}
\newtheorem{remark}[theorem]{Remark}
\newtheorem{example}[theorem]{Example}
\newtheorem*{example*}{Example}
\newtheorem*{examplectd*}{Example (continued)}
\title{Graded Frobenius cluster categories}
\author{Jan E. Grabowski\footnotemark[2] 
\\ \small{\textit{Department of Mathematics and Statistics, Lancaster University,}}
\\ \small{\textit{Lancaster, LA1 4YF, United Kingdom}}
\and Matthew Pressland\footnotemark[3]
\\ \small{\textit{Institut f\"ur Algebra und Zahlentheorie, Universit\"at Stuttgart,}}
\\ \small{\textit{Pfaffenwaldring 57, 70569 Stuttgart, Germany}}
}
\date{30th October 2017}
\begin{document}

\maketitle

\renewcommand{\thefootnote}{\fnsymbol{footnote}}
\footnotetext[2]{Email: \url{j.grabowski@lancaster.ac.uk}.  Website: \url{http://www.maths.lancs.ac.uk/~grabowsj/}}
\footnotetext[3]{Email: \url{mdpressland@bath.edu}.  Website: \url{http://www.iaz.uni-stuttgart.de/LstAGeoAlg/Pressland/}}
\renewcommand{\thefootnote}{\arabic{footnote}}
\setcounter{footnote}{0}

\begin{abstract} Recently the first author studied multi-gradings for generalised cluster categories, these being 2-Calabi--Yau triangulated categories with a choice of cluster-tilting object.  The grading on the category corresponds to a grading on the cluster algebra without coefficients categorified by the cluster category and hence knowledge of one of these structures can help us study the other.

In this work, we extend the above to certain Frobenius categories that categorify cluster algebras with coefficients.  We interpret the grading K-theoretically and prove similar results to the triangulated case, in particular obtaining that degrees are additive on exact sequences.

We show that the categories of Buan, Iyama, Reiten and Scott, some of which were used by Gei\ss, Leclerc and Schr\"oer to categorify cells in partial flag varieties, and those of Jensen, King and Su, categorifying Grassmannians, are examples of graded Frobenius cluster categories.

\vspace{1em}
\noindent MSC (2010): 13F60 (Primary), 18E30, 16G70 (Secondary)

\end{abstract}

\section{Introduction}

Gradings for cluster algebras have been introduced in various ways by a number of authors and for a number of purposes.  The evolution of the notion started with the foundational work of Fomin and Zelevinsky \cite{FZ-CA1}, who consider $\integ^{n}$-gradings where $n$ is precisely the rank of the cluster algebra.  Shortly afterwards, in the course of considering Poisson structures compatible with cluster algebras, Gekhtman, Shapiro and Vainshtein \cite{GSV-CA-Poisson} gave a definition of a toric action on a cluster algebra, which dualises to that of a $\integ^{m}$-grading, where $m$ can now be arbitrary.  %We will take the latter algebraic point of view.

In \cite{GradedCAs} the first author examined the natural starting case of finite type cluster algebras without coefficients.  A complete classification of the integer multi-gradings that occur was given and it was observed that the gradings so obtained were all \emph{balanced}, that is, there exist bijections between the set of variables of degree $\underline{d}$ and those of degree $-\underline{d}$.

This phenomenon was explained by means of graded generalised cluster categories, where---following \cite{DominguezGeiss}---by generalised cluster category we mean a 2-Calabi--Yau triangulated category $\curly{C}$ with a basic cluster-tilting object $T$.  The definition made in \cite{GradedCAs} associates an integer vector (the multi-degree) to an object in the category in such a way that the vectors are additive on distinguished triangles and transform naturally under mutation.  This is done via the key fact that every object in a generalised cluster category has a well-defined associated integer vector-valued datum called the index with respect to $T$; in order to satisfy the aforementioned two properties, degrees are necessarily linear functions of the index.

The categorical approach has the advantage that it encapsulates the global cluster combinatorics, or more accurately the set of indices does.  Another consequence is an explanation for the observed balanced gradings in finite type: the auto-equivalence of the cluster category given by the shift functor induces an automorphism of the set of cluster variables that reverses signs of degrees.  Hence any cluster algebra admitting a (triangulated) cluster categorification necessarily has all its gradings being balanced (providing the set of reachable rigid indecomposable objects, which is in bijection with the set of cluster variables, is closed under the shift functor). This is the case for finite type or, more generally, acyclic cluster algebras having no coefficients.

Our main goal is to provide a version of the above in the Frobenius, i.e.\ exact category, setting, similarly to the triangulated one.  A Frobenius category is exact with enough projective objects and enough injective objects, and these classes of objects coincide.  From work of Fu and Keller \cite{FuKeller} and the second author \cite{Pressland}, we have a definition of a Frobenius cluster category and objects in such a category also have indices.  

From this we may proceed along similar lines to \cite{GradedCAs} to define gradings and degrees, except that we elect to work (a) over an arbitrary abelian group $\mathbb{A}$ and (b) in a more basis-free way by working K-theoretically and with the associated Euler form.  We prove the foundational properties of gradings for Frobenius cluster categories: that degrees are compatible with taking the cluster character, that they are additive on exact sequences and that they are compatible with mutation.

Furthermore, we prove an analogue of a result of Palu \cite{Palu-Groth-gp} in which we show that the space of gradings for a graded Frobenius cluster category $\curly{E}$ is closely related to the Grothendieck group, namely that the former is isomorphic to $\Hom{\integ}{\mathrm{K}_{0}(\curly{E})}{\abgp}$.  This enables one to show that some categorical datum is a grading by seeing that it respects exact sequences, and conversely that from the cluster algebra categorified by $\curly{E}$ we may deduce information about $\mathrm{K}_{0}(\curly{E})$.  We exhibit this on examples, notably the categories of Buan, Iyama, Reiten and Scott \cite{BIRS1} corresponding to Weyl group elements, also studied by Gei\ss, Leclerc and Schr\"oer \cite{GLS-PFV} in the context of categorifying cells in partial flag varieties.

The homogeneous coordinate rings of Grassmannians are an example of particular importance in this area.  They admit a graded cluster algebra structure but beyond the small number of cases when this structure is of finite type, little is known about the cluster variables.  A first step towards a better understanding is to describe how the degrees of the cluster variables are distributed: are the degrees unbounded? does every natural number occur as a degree? are there finitely many or infinitely many variables in each occurring degree?  By using the Frobenius cluster categorification of Jensen, King and Su \cite{JKS} and the grading framework here, we can hope to begin to examine these questions.

\subsection*{Acknowledgements}

The work herein was begun during a research visit of the second author to Lancaster University, funded by EPSRC grant EP/M001148/1.  The second author would like to thank Paul Balmer, Andrew Hubery, Alastair King, Sondre Kvamme and Pierre-Guy Plamondon for useful conversations, and acknowledge financial support from Sonderforschungsbereich 701 at Universit\"{a}t Bielefeld, and from the Max-Planck-Gesellschaft.

\section{Preliminaries}
\label{preliminaries}

The construction of a cluster algebra of geometric type from an initial seed $(\underline{x},B)$, due to Fomin and Zelevinsky \cite{FZ-CA1}, is now well-known.  Here $\underline{x}=(x_1,\dotsc,x_n)$ is a transcendence base for a certain field of fractions of a polynomial ring, called a cluster, and $B$ is an $n\times r$ integer matrix whose uppermost $r\times r$ submatrix (the principal part of $B$) is skew-symmetrisable. If the principal part of $B$ is skew-symmetric, then $B$ is often replaced by the (ice) quiver $Q=Q(B)$ it defines in the natural way.  

We refer the reader who is unfamiliar with this construction to the survey of Keller \cite{Keller-CASurvey} and the books of Marsh \cite{Marsh-book} and of Gekhtman, Shapiro and Vainshtein \cite{GSV-Book} for an introduction to the topic and summaries of the main related results in this area.

We set some notation for later use.  For each index $1\leq k\leq r$, set
\begin{align*}
\underline{b}_{k}^{+} & = -\underline{\boldsymbol{e}}_{k}+\sum_{b_{ik}>0}b_{ik}\underline{\boldsymbol{e}}_{i} \qquad \text{and} \\
\underline{b}_{k}^{-} & = -\underline{\boldsymbol{e}}_{k}-\sum_{b_{ik}<0}b_{ik}\underline{\boldsymbol{e}}_{i},
\end{align*}
where the vector $\underline{\boldsymbol{e}}_{i}\in \integ^{n}$ ($n$ being the number of rows of $B$) is the $i$th standard basis vector.  Note that the $k$th column of $B$ may be recovered as $B_{k}=\underline{b}_{k}^{+}-\underline{b}_{k}^{-}$.

Then for $1\leq k\leq r$, the exchange relation for mutation of the seed $(\underline{x},B)$ in the direction $k$ is given by
\[ x_{k}^{\prime}=\underline{x}^{\underline{b}_{k}^{+}}+\underline{x}^{\underline{b}_{k}^{-}}, \]
where for $\underline{a}=(a_{1},\dotsc ,a_{n})$ we set
\[ \underline{x}^{\underline{a}} = \prod_{i=1}^{n} x_{i}^{a_{i}}. \]

If $(\underline{x},B)$ is a seed, we call the elements of $\underline{x}$ cluster variables. The variables $x_1,\dotsc,x_r$ are called mutable, and $x_{r+1},\dots,x_n$ (which appear in the cluster of every seed related to $(\underline{x},B)$ by mutations) are called frozen. Note that while some authors do not consider frozen variables to be cluster variables, it will be convenient for us to do so. We will sometimes also refer to the indices $1,\dotsc,r$ and $r+1,\dotsc,n$ as mutable and frozen respectively.

Throughout, for simplicity, we will assume that all algebras and categories are defined over $\mathbb{C}$.  All modules are left modules. For a Noetherian algebra $A$, we denote the abelian category of finitely generated $A$-modules by $\fgmod{A}$. If $B$ is a matrix, we denote its transpose by $B^{t}$.

\subsection{Graded seeds, cluster algebras and cluster categories}\label{s:gradedCAs}

Let $\abgp$ be an abelian group. The natural definition for an $\abgp$-graded seed is as follows.

\begin{definition} A multi-graded seed is a triple $(\underline{x},B,G)$ such that
\begin{enumerate}[label=(\alph*)]
\item $(\underline{x}=(x_{1},\dotsc ,x_{n}),B)$ is a seed, and
\item $G\in\abgp^n$, thought of as a column vector, satisfies $B^{t}G=0$.
\end{enumerate}
The matrix multiplication in (b) makes sense since $\abgp$ is a $\integ$-module. This is most transparent when $\abgp=\integ^m$, so that $G$ is an $n\times m$ integer matrix.
\end{definition}

From now on, unless we particularly wish to emphasise $\abgp$, we will drop it from the notation and simply use the term ``graded''.

The above data defines $\deg_{G}(x_{i})=G_{i}\in\abgp$ (the $i$th component of $G$) and this can be extended to rational expressions in the generators $x_{i}$ in the obvious way. Condition (b) ensures that for each $1\leq k\leq r$, we have $\underline{b}_k^+\cdot G=\underline{b}_k^-\cdot G$, making sense of these dot products via the $\integ$-module structure of $\abgp$, so every exchange relation is homogeneous, and
\[G_k':=\deg(x_k')=\underline{b}_k^+\cdot G-G_k=\underline{b}_k^-\cdot G-G_k.\]
Thus we can also mutate our grading, and repeated mutation propagates a grading on an initial seed to every cluster variable and to the associated cluster algebra.  Hence we obtain the following well-known result, given in various forms in the literature.

\begin{proposition}\label{p:gradedCA} The cluster algebra $\curly{A}(\underline{x},B,G)$ associated to an initial graded seed $(\underline{x},B,G)$ is an $\abgp$-graded algebra. Every cluster variable of $\curly{A}(\underline{x},B,G)$ is homogeneous with respect to this grading.  \qed
\end{proposition}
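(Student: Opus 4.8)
The plan is to exhibit $\curly{A}(\underline{x}, B, G)$ as a graded subalgebra of the Laurent polynomial ring $L = \complex[x_1^{\pm 1}, \dotsc, x_n^{\pm 1}]$. I would make $L$ into an $\abgp$-graded algebra by declaring the monomial $\underline{x}^{\underline{a}}$ to be homogeneous of degree $\underline{a}\cdot G$; since degrees add when monomials are multiplied, this is a genuine $\abgp$-grading $L = \bigoplus_{d\in\abgp} L_d$. By the Laurent phenomenon every cluster variable lies in $L$, and $\curly{A}$ is by definition the subalgebra of $L$ generated by the cluster variables of all seeds mutation-equivalent to $(\underline{x}, B)$. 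A subalgebra generated by homogeneous elements is automatically graded, so both assertions of the proposition reduce to the single claim that every cluster variable is homogeneous in $L$.

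First I would prove this claim by induction on the number of mutations needed to reach a seed from the initial one, carrying along the inductive hypothesis that the seed in question is itself a graded seed---its iterated exchange matrix $B'$ and grading vector $G'$ satisfy $(B')^t G' = 0$---and that its cluster variables are homogeneous of degrees $G'_1, \dotsc, G'_n$. The base case is immediate, since $\deg_G(x_i) = G_i$. For the inductive step, mutating in a direction $k \leq r$ produces a new variable whose exchange relation expresses it as a sum of two Laurent monomials in the current cluster; the graded-seed condition $(B')^t G' = 0$ forces these two monomials to have equal degree, exactly as in the computation displayed before the statement, so the new variable is homogeneous. Frozen variables are never mutated and retain their degrees, so no condition on them is needed.

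The one substantive point, and where I expect the work to lie, is closing the induction by checking that mutation takes graded seeds to graded seeds: if $(B')^t G' = 0$ and $G''$ is the mutated grading vector (equal to $G'$ away from the $k$th entry, whose value is read off from the homogeneous exchange relation), then $(\mu_k B')^t G'' = 0$. I would verify this one mutable column at a time, substituting the Fomin--Zelevinsky matrix mutation rule for the entries of $\mu_k B'$ and using the decomposition of each column into its positive and negative parts; the sign-sensitive correction terms in the mutation rule are precisely what cancels the contribution of the single altered entry $G''_k$. Because $\mu_k$ is an involution, this is equivalently the statement that $\mu_k$ restricts to a bijection between the solution set of $B^t G = 0$ and that of $(\mu_k B)^t G' = 0$, so that gradings transport naturally along mutations. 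Granting this, the induction is complete, every cluster variable is homogeneous, and $\curly{A}$ is $\abgp$-graded as claimed.
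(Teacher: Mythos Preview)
Your proposal is correct and follows the same line as the paper, which in fact does not give a proof at all: the proposition is marked \qed\ and is described as ``well-known,'' with the preceding paragraph sketching exactly your inductive step (the homogeneity of the exchange relation and the formula for the mutated grading entry). Your write-up is more careful than the paper's sketch---in particular you explicitly isolate the verification that $(\mu_k B')^t G''=0$, which the paper leaves implicit---and your use of the Laurent phenomenon to work inside a single graded ambient ring is a clean way to package the conclusion, but the underlying argument is the same.
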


We refer the reader to \cite{GradedCAs} for a more detailed discussion of the above and further results regarding the existence of gradings, relationships between gradings and a study of $\integ$-gradings for cluster algebras of finite type with no coefficients.

\subsection{Graded triangulated cluster categories}\label{s:graded-triang-cluster-categories}

Our interest here is in generalising the categorical parts of \cite{GradedCAs}, which refer to models of cluster algebras without frozen variables given by $2$-Calabi--Yau triangulated categories, and explain how to interpret the data of a grading on the cluster algebra at this categorical level. Our main goal is to provide a similar theory for stably $2$-Calabi--Yau Frobenius categories, which may be used to model cluster algebras that do have frozen variables.

In order to motivate what will follow for the Frobenius setting, we give the key definitions and statements from the triangulated case, without proofs as these may be found in \cite{GradedCAs}.

\begin{definition}[{\cite{DominguezGeiss}}] Let $\curly{C}$ be a triangulated 2-Calabi--Yau category with suspension functor $\Sigma$ and let $T\in \curly{C}$ be a basic cluster-tilting object.  We will call the pair $(\curly{C},T)$ a generalised cluster category.
\end{definition}

Write $T=T_{1}\dsum \dotsm \dsum T_{r}$.  Setting $\Lambda=\op{\End{\curly{C}}{T}}$, the functor\footnote{This functor is replaced by $E=F\Sigma$ in \cite{DominguezGeiss}, \cite{GradedCAs}; we use $F$ here, as in \cite{FuKeller}, for greater compatibility with the Frobenius case.} $F=\curly{C}(T,-)\colon \curly{C} \to \fgmod{\Lambda}$ induces an equivalence $\curly{C}/\text{add}(\Sigma T)\to\fgmod{\Lambda}$.  We may also define an exchange matrix associated to $T$ by
\[ (B_{T})_{ij}=\dim \text{Ext}_{\Lambda}^{1}(S_{i},S_{j})-\dim \text{Ext}_{\Lambda}^{1}(S_{j},S_{i}). \]
Here the $S_{i}=FT_{i}/\rad FT_{i}$, $i=1,\dotsc ,r$ are the simple $\Lambda$-modules. Thus, if the Gabriel quiver of the algebra $\Lambda$ has no loops or $2$-cycles, $B_T$ is its corresponding skew-symmetric matrix.

For each $X\in \curly{C}$ there exists a distinguished triangle
\[ \bigdsum_{i=1}^{r} T_{i}^{m(i,X)} \to \bigdsum_{i=1}^{r} T_{i}^{p(i,X)} \to X \to \Sigma \left( \bigdsum_{i=1}^{r} T_{i}^{m(i,X)} \right) \]

Define the index of $X$ with respect to $T$, denoted $\ind{T}{X}$, to be the integer vector with $\ind{T}{X}_{i}=p(i,X)-m(i,X)$.  By \cite[\S 2.1]{Palu}, $\ind{T}{X}$ is well-defined and we have a cluster character
\begin{align*} C_{?}^{T}\colon \text{Obj}(\curly{C}) &\to \mathbb{C}[x_{1}^{\pm 1},\dotsc ,x_{r}^{\pm 1}] \\
 X & \mapsto \underline{x}^{\ind{T}{X}}\sum_{\underline{e}} \chi(\mathrm{Gr}_{\underline{e}}(F\Sigma X))\underline{x}^{B_{T}\cdot\underline{e}}
\end{align*}
Here $\mathrm{Gr}_{\underline{e}}(F\Sigma X)$ is the quiver Grassmannian of $\Lambda$-submodules of $F\Sigma X$ of dimension vector $\underline{e}$ and $\chi$ is the topological Euler characteristic.  We use the same monomial notation $\underline{x}^{\underline{a}}$ as previously.

We recall that for any cluster-tilting object $U$ of $\curly{C}$, and any indecomposable summand $U_k$ of $U$, there are non-split exchange triangles
\[ U_{k}^{*} \to M \to U_{k} \to \Sigma U_{k}^{*} \qquad \text{and} \qquad U_{k} \to M' \to U_{k}^{*} \to \Sigma U_{k} \]
with $M,M' \in \add(U)$, that glue together to form an Auslander--Reiten $4$-angle
\[U_k\to M'\to M\to U_k\]
in $\curly{C}$ \cite[Definition~3.8]{IyamaYoshino}. If the quiver of $\op{\End{\curly{C}}{U}}$ has no loops or $2$-cycles incident with the vertex corresponding to $U_k$, then $M,M'\in\add(U/U_k)$ and $U^{*}=(U/U_{k})\dsum U_{k}^{*}$ is again cluster-tilting. In the generality of our setting, these results are all due to Iyama and Yoshino \cite{IyamaYoshino}.

The natural definition of a graded generalised cluster category is then the following.

\begin{definition}[{\cite[Definition~5.2]{GradedCAs}}]\label{d:graded-gen-cl-cat} Let $(\curly{C},T)$ be a generalised cluster category and let $G\in\abgp^r$ such that $B_{T}G=0$.  We call the tuple $(\curly{C},T,G)$ a graded generalised cluster category.
\end{definition}

Note that, in this context, $B_{T}$ is square and skew-symmetric, so we may suppress taking the transpose in the equation $B_{T}G=0$.

\begin{definition}[{\cite[Definition~5.3]{GradedCAs}}]\label{d:degree-graded-gen-cl-cat} Let $(\curly{C},T,G)$ be a graded generalised cluster category.  For any $X\in \curly{C}$, we define $\deg_{G}(X)=\ind{T}{X}\cdot G$.\end{definition}

The main results about graded generalised cluster categories are summarised in the following Proposition, the most significant of these being \ref{p:prop-of-gen-cc-additive-on-triang}. The proofs in \cite{GradedCAs} are given for $\abgp=\integ^m$, but remain valid in the more general setting.

\begin{proposition}[{\cite[\S5]{GradedCAs}}]\label{p:prop-of-gen-cc} Let $(\curly{C},T,G)$ be a graded generalised cluster category.
\begin{enumerate}[label=(\roman*)]
\item Let $\mathbb{C}[x_{1}^{\pm 1},\dotsc ,x_{r}^{\pm 1}]$ be $\abgp$-graded by $\deg_{G}(x_{i})=G_{i}$ (the $i$th component of $G$). Then for all $X \in \curly{C}$, the cluster character $C_{X}^{T}\in \mathbb{C}[x_{1}^{\pm 1},\dotsc ,x_{r}^{\pm 1}]$ is homogeneous of degree $\deg_G(X)$.
\item\label{p:prop-of-gen-cc-additive-on-triang} For any distinguished triangle $X\to Y \to Z \to \Sigma X$ of $\curly{C}$, we have
\[ \deg_{G}(Y)=\deg_{G}(X)+\deg_{G}(Z).\]
\item\label{p:prop-of-gen-cc-mutation} The degree $\deg_{G}$ is compatible with mutation in the sense that for every cluster-tilting object $U$ of $\curly{C}$ with indecomposable summand $U_{k}$ we have
\[ \deg_{G}(U_{k}^{*})=\deg_{G}(M)-\deg_{G}(U_{k})=\deg_{G}(M')-\deg_{G}(U_{k}), \]
where $U_{k}^{*}$, $M$ and $M'$ are as in the above description of exchange triangles in $\curly{C}$.
\item\label{p:prop-of-gen-cc-Groth-gp} The space of gradings for a generalised cluster category $(\curly{C},T)$ may be identified with $\Hom{\integ}{\mathrm{K}_0(\curly{C})}{\abgp}$, where $\mathrm{K}_0(\curly{C})$ is the Grothendieck group of $\curly{C}$ as a triangulated category.\footnote{This statement corrects \cite[Proposition~5.5]{GradedCAs} for the case $\abgp=\integ$, which replaces $\Hom{\integ}{\mathrm{K}_0(\curly{C})}{\integ}$ by $\mathrm{K}_0(\curly{C})$ itself. The proof given in \cite{GradedCAs} proves the statement given here for an arbitrary abelian group essentially without modification. An example of $\curly{C}$ for which $\mathrm{K}_0(\curly{C})$ and $\Hom{\integ}{\mathrm{K}_0(\curly{C})}{\integ}$ are non-isomorphic is provided by \cite[Thm.~1.3]{BKL}.}
\item For each $X\in \curly{C}$, $\deg_{G}(\Sigma X)=-\deg_{G}(X)$.  That is, for each $d\in \abgp$, the shift automorphism $\Sigma$ on $\curly{C}$ induces a bijection between the objects of $\curly{C}$ of degree $d$ and those of degree $-d$. \qed
\end{enumerate}
\end{proposition}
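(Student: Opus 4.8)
The plan is to reduce the whole statement to the single identity $\deg_G(\Sigma X)=-\deg_G(X)$; the bijection assertion will then follow formally from the fact that $\Sigma$ is an autoequivalence of $\curly{C}$.

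To establish the identity I would argue via additivity on triangles, which is item~\ref{p:prop-of-gen-cc-additive-on-triang}. First note that $\deg_G(0)=0$: applying that additivity to the distinguished triangle $0\to 0\to 0\to 0$ gives $\deg_G(0)=2\deg_G(0)$, whence $\deg_G(0)=0$ in $\abgp$. Next, start from the trivial distinguished triangle $X\xrightarrow{\id}X\to 0\to\Sigma X$ and rotate once to obtain the distinguished triangle
\[ X\to 0\to\Sigma X\xrightarrow{-\Sigma\id}\Sigma X. \]
Applying item~\ref{p:prop-of-gen-cc-additive-on-triang} to this triangle, whose middle term is $0$, yields $0=\deg_G(0)=\deg_G(X)+\deg_G(\Sigma X)$, and hence $\deg_G(\Sigma X)=-\deg_G(X)$. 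Alternatively, and perhaps more transparently, item~\ref{p:prop-of-gen-cc-Groth-gp} identifies the grading with a group homomorphism $\phi\colon\mathrm{K}_0(\curly{C})\to\abgp$ satisfying $\deg_G(X)=\phi([X])$; since the defining relations of the triangulated Grothendieck group force $[\Sigma X]=-[X]$, linearity of $\phi$ gives the identity immediately.

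For the second sentence, recall that the suspension functor $\Sigma$ is an autoequivalence of the triangulated category $\curly{C}$, with quasi-inverse $\Sigma^{-1}$, so it induces a bijection on isomorphism classes of objects. Fix $d\in\abgp$. The identity just proved shows that $\Sigma$ carries any object of degree $d$ to one of degree $-d$, while applying the same identity to $\Sigma^{-1}Y$ shows $\Sigma^{-1}$ carries any object of degree $-d$ back to one of degree $d$. Thus $\Sigma$ restricts to mutually inverse bijections between the isomorphism classes of objects of degree $d$ and those of degree $-d$.

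The argument becomes immediate once the correct distinguished triangle is identified, so I do not expect a serious obstacle. The only points needing a little care are the justification that $\deg_G(0)=0$, the bookkeeping of the rotation and sign in $X\to 0\to\Sigma X\to\Sigma X$ (the connecting map $-\Sigma\id$ plays no role, since $\deg_G$ depends only on the objects), and reading ``bijection between the objects'' at the level of isomorphism classes, which is exactly what an equivalence supplies.
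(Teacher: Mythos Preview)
Your proposal addresses only part~(v); it takes parts~(ii) and~(iv) as established and derives~(v) from them. That derivation is correct and standard: either argument you give---applying additivity on the rotated triangle $X\to 0\to\Sigma X\to\Sigma X$, or invoking the Grothendieck-group relation $[\Sigma X]=-[X]$---works cleanly, and the bijection statement indeed follows from $\Sigma$ being an autoequivalence.

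However, note that the paper does not actually prove this proposition at all: it is stated with a citation to \cite{GradedCAs} and closed with a \qed, the surrounding text making explicit that the triangulated results are recorded ``without proofs as these may be found in \cite{GradedCAs}''. So there is no in-paper argument to compare against. If your intention was to supply a full proof of the proposition, you have not done so: parts~(i)--(iv) require independent arguments (the homogeneity of the cluster character, additivity of the index on triangles modulo the grading condition, and Palu's computation of $\mathrm{K}_0(\curly{C})$), none of which follow from the identity $\deg_G(\Sigma X)=-\deg_G(X)$. If instead you were only asked to supply~(v), your argument is fine, and the phrase ``the whole statement'' in your first sentence should be read as ``the whole of part~(v)'' to avoid suggesting that (i)--(iv) somehow reduce to this identity.
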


Part~\ref{p:prop-of-gen-cc-mutation} of the preceding proposition shows how to mutate the data of $G$ when mutating the cluster-tilting object $T$, to obtain a new grading vector compatible with the exchange matrix of the new cluster-tilting object, defining the same grading on the cluster algebra. 

However, we may obtain an even stronger conclusion from part~\ref{p:prop-of-gen-cc-Groth-gp}, since this provides a ``base-point free'' definition of a grading, depending only on the category $\curly{C}$ and not on the cluster-tilting object $T$. Read differently, this shows that if $(\curly{C},T,G)$ is a graded generalised cluster category, then for any cluster-tilting object $T'\in\curly{C}$, there is a unique $G'\in\abgp^r$ such that $(\curly{C},T',G')$ is a graded generalised cluster category and $\deg_G(X)=\deg_{G'}(X)$ for all $X\in\curly{C}$. We will explain this in more detail below in the case of Frobenius categories.

If $\curly{H}$ is the category of coherent sheaves on a weighted projective line with all weights odd, then the Grothendieck group of the cluster category $\curly{C}$ of $\curly{H}$ is a non-zero quotient of $\integ_2\oplus\integ_2$ \cite[Theorem~1.3]{BKL}. (If one only imposes relations coming from triangles obtained by projecting triangles of the derived category of $\curly{C}$ to $\curly{H}$, then one obtains exactly $\integ_2\oplus\integ_2$ \cite[Proposition~3.7(ii)]{BKL}, but $\curly{C}$ may have more triangles than these.) By part~\ref{p:prop-of-gen-cc-Groth-gp} of the preceding proposition, this cluster category $\curly{C}$ admits no $\integ$-gradings, but does admit $\integ_2$-gradings. In fact \cite[Proposition 3.10(ii)]{BKL}, any such grading is a linear combination of the functions giving the degree and rank of a sheaf modulo $2$.

Part (v) of Proposition~\ref{p:prop-of-gen-cc} shows that for cluster algebras admitting a categorification by a generalised cluster category $(\curly{C},T)$ such that the mutation class of $T$ is closed under the shift functor $\Sigma$, all gradings must be balanced, meaning that for any $d\in\abgp$, the cluster variables of degree $d$ are in bijection with those of degree $-d$.

If $Q$ admits a nondegenerate Jacobi-finite potential $W$, then the corresponding cluster algebra is categorified by the Amiot cluster category $\curly{C}_{Q,W}$, which has a cluster-tilting object $T$ whose endomorphism algebra is the Jacobian algebra of $(Q,W)$ \cite{Amiot}. If $Q$ admits a maximal green sequence, then it provides a sequence of mutations from $T$ to $\Sigma T$ in $\curly{C}_{Q,W}$, so the mutation class of $T$ is closed under $\Sigma$ \cite[Proposition~5.17]{Keller-QD}. It follows that all gradings of the cluster algebra associated to $Q$ are balanced. All of these assumptions hold, for example, when $Q$ is a finite acyclic quiver (so $W=0$); for the statement about maximal green sequences, see Br\"{u}stle, Dupont and P\'{e}rotin \cite[Lemma~2.20]{BDP}.

Conversely, we can use gradings to show that certain cluster algebras cannot admit a categorification as above. For example, the Markov cluster algebra, all of whose exchange matrices are given by
\[B=\begin{pmatrix}0&2&-2\\-2&0&2\\2&-2&0\end{pmatrix}\]
or its negative, admits the grading $(1,1,1)$. This is an integer grading under which all cluster variables have strictly positive degrees, so it is not balanced. While the Markov quiver associated to $B$ has a non-degenerate potential for which the resulting (completed) Jacobian algebra is finite dimensional, and thus has an associated Amiot cluster category $\curly{C}$, this category has exactly two mutation classes of cluster-tilting objects. (One can also realise this Jacobian algebra as that coming from a tagged triangulation of the once-punctured torus; such triangulations can include tagged arcs or not, but it is not possible to mutate a triangulation without tagged arcs into one with tagged arcs, giving another explanation for the existence of these two mutation classes.) The shift functor on $\curly{C}$ takes rigid indecomposable objects appearing as summands in one mutation class (which correspond to cluster variables) to rigid indecomposables from the other class (which do not), allowing the existence of a non-balanced grading on the cluster algebra.

It has been shown by Ladkani that many of these properties hold more generally for quivers arising from triangulations of punctured surfaces \cite{Ladkani}.

\section{Graded Frobenius cluster categories}

In this section, we provide the main technical underpinnings for the Frobenius version of the above theory, in which we consider exact categories rather than triangulated ones. Background on exact categories, and homological algebra in them, can be found in B\"{u}hler's survey \cite{Buehler}.

An exact category $\curly{E}$ is called a Frobenius category if it has enough projective objects and enough injective objects, and these two classes of objects coincide. A typical example of such a category is the category of finite dimensional modules over a finite dimensional self-injective algebra. More generally, if $B$ is a Noetherian algebra with finite left and right injective dimension as a module over itself (otherwise known as an Iwanaga--Gorenstein algebra), the category
\[\operatorname{GP}(B)=\{X\in\fgmod{B}:\Ext{i}{B}{X}{B}=0,\ i>0\},\]
is Frobenius \cite{Buchweitz}. (Here $\operatorname{GP}(B)$ is equipped with the exact structure in which the exact sequences are precisely those  that are exact when considered in the abelian category $\fgmod{B}$.)  The initials ``GP'' are chosen for ``Gorenstein projective''.

Given a Frobenius category $\curly{E}$, its stable category $\underline{\curly{E}}$ is formed by taking the quotient of $\curly{E}$ by the ideal of morphisms factoring through a projective-injective object. By a famous result of Happel \cite[Theorem~2.6]{Happelbook}, $\underline{\curly{E}}$ is a triangulated category with shift functor $\Omega^{-1}$, where $\Omega^{-1}X$ is defined by the existence of an exact sequence
\[0\to X\to Q \to\Omega^{-1}X\to0\]
in which $Q$ is injective. The distinguished triangles of $\underline{\curly{E}}$ are isomorphic to those of the form
\[X\to Y\to Z\to\Omega^{-1}X\]
where
\[0\to X\to Y\to Z\to 0\]
is a short exact sequence in $\curly{E}$.

\begin{definition}
A Frobenius category $\curly{E}$ is stably $2$-Calabi--Yau if the stable category $\underline{\curly{E}}$ is Hom-finite and there is a functorial duality
\[\mathrm{D}\Ext{1}{\curly{E}}{X}{Y}=\Ext{1}{\curly{E}}{Y}{X}\]
for all $X,Y\in\curly{E}$.
\end{definition}

\begin{remark} The above definition is somewhat slick---it is equivalent to requiring that $\underline{\curly{E}}$ is \linebreak $2$-Calabi--Yau as a triangulated category (that is, that $\underline{\curly{E}}$ is Hom-finite and $\Omega^{-2}$ is a Serre functor), as one might expect.
\end{remark}

Let $\curly{E}$ be a stably $2$-Calabi--Yau Frobenius category. If $U$ is cluster-tilting in $\curly{E}$, then it is also cluster-tilting in the $2$-Calabi--Yau triangulated category $\curly{E}$, and a summand $U_k$ of $U$ is indecomposable in $\underline{\curly{E}}$ if and only if it is indecomposable and non-projective in $\curly{E}$. Thus for any cluster-tilting object $U$ of $\curly{E}$ and for any non-projective indecomposable summand $U_{k}$ of $U$, we can lift the exchange triangles involving $U_k$ from $\underline{\curly{E}}$ to $\curly{E}$, and obtain exchange sequences
\[0\to U_{k}^{*} \to M \to U_{k} \to 0 \qquad \text{and} \qquad 0\to U_{k} \to M' \to U_{k}^{*} \to 0 \]
with $M,M' \in\add{(U)}$. If the quiver of $\op{\End{\curly{E}}{U}}$ has no loops or $2$-cycles incident with the vertex corresponding to $U_k$, then $U_k'=U/U_k\oplus U_k^*$ is again cluster-tilting, just as in the triangulated case.

Fu and Keller \cite{FuKeller} give the following definition of a cluster character on a stably $2$-Calabi--Yau Frobenius category.

\begin{definition}[{\cite[Definition~3.1]{FuKeller}}]
Let $\curly{E}$ be a stably $2$-Calabi--Yau Frobenius category, and let $R$ be a commutative ring. A cluster character on $\curly{E}$ is a map $\phi$ on the set of objects of $\curly{E}$, taking values in $R$, such that
\begin{itemize}
\item[(i)]if $M\iso M'$ then $\phi_{M}=\phi_{M'}$,
\item[(ii)]$\phi_{M\dsum N}=\phi_{M}\phi_{N}$, and
\item[(iii)]if $\dim\Ext{1}{\curly{E}}{M}{N}=1$ (equivalently, $\dim\Ext{1}{\curly{E}}{N}{M}=1$), and
\begin{align*}
&0\to M\to X\to N\to 0,\\
&0\to N\to Y\to M\to 0
\end{align*}
are non-split sequences, then
\[\phi_{M}\phi_{N}=\phi_{X}+\phi_{Y}.\]
\end{itemize}
\end{definition}

Let $\curly{E}$ be a stably $2$-Calabi--Yau Frobenius category, and assume there exists a cluster-tilting object $T\in\curly{E}$. Assume without loss of generality that $T$ is basic, and let $T=\bigdsum_{i=1}^nT_i$ be a decomposition of $T$ into pairwise non-isomorphic indecomposable summands. We number the summands so that $T_i$ is projective if and only if $r<i\leq n$. Let $\Lambda=\op{\End{\curly{E}}{T}}$, and $\underline{\Lambda}=\op{\End{\underline{\curly{E}}}{T}}=\Lambda/\Lambda e\Lambda$, where $e$ is the idempotent given by projection onto the maximal projective-injective summand $\bigoplus_{i=r+1}^nT_i$ of $T$. 

We assume that $\Lambda$ is Noetherian, as with this assumption the forms discussed below will be well-defined. The examples that concern us later will have Noetherian $\Lambda$, but we acknowledge that this assumption is somewhat unsatisfactory, given that it is often difficult to establish. 

Fu and Keller \cite{FuKeller} show that such a $T$ determines a cluster character on $\curly{E}$, as we now explain; while the results of \cite{FuKeller} are stated in the case that $\curly{E}$ is Hom-finite, the assumption that $\Lambda$ is Noetherian is sufficient providing one is careful to appropriately distinguish between the two Grothendieck groups $\mathrm{K}_0(\fgmod{\Lambda})$ and $\mathrm{K}_0(\fd{\Lambda})$ of finitely generated and finite dimensional $\Lambda$-modules respectively.

We write
\begin{align*}
F&=\Hom{\curly{E}}{T}{-}\colon\curly{E}\to\fgmod{\Lambda},\\
E&=\Ext{1}{\curly{E}}{T}{-}\colon\curly{E}\to\fgmod{\Lambda}.
\end{align*}
Note that $E$ may also be expressed as $\Hom{\underline{\curly{E}}}{T}{\Omega^{-1}(-)}$, meaning it takes values in $\fgmod{\underline{\Lambda}}$. For $M\in\fgmod{\Lambda}$ and $N\in\fd{\Lambda}$, we write
\begin{align*}
\ip{M}{N}_1&=\dim\Hom{\Lambda}{M}{N}-\dim\Ext{1}{\Lambda}{M}{N},\\
\ip{M}{N}_3&=\dim\Hom{\Lambda}{M}{N}-\dim\Ext{1}{\Lambda}{M}{N}+\dim\Ext{2}{\Lambda}{M}{N}-\dim\Ext{3}{\Lambda}{M}{N}.
\end{align*}

The algebra $\underline{\Lambda}=\op{\End{\underline{\curly{E}}}{T}}$ is finite dimensional since $\underline{\curly{E}}$ is Hom-finite, so $\fgmod{\underline{\Lambda}}\subseteq\fd\Lambda$. Fu and Keller show \cite[Proposition~3.2]{FuKeller} that if $M\in\fgmod{\underline{\Lambda}}$, then $\ip{M}{N}_3$ depends only on the dimension vector $(\dim\Hom{\Lambda}{P_i}{M})_{i=1}^n$, where the $P_i=FT_i$ are a complete set of indecomposable projective $\Lambda$-modules. Thus if $v\in\integ^r$, we define
\[\ip{v}{N}_3:=\ip{M}{N}_3\]
for any $M\in\fgmod{\underline{\Lambda}}$ with dimension vector $v$.

Let $R=\mathbb{C}[x_1^{\pm1},\dotsc,x_n^{\pm1}]$ be the ring of Laurent polynomials in $x_1,\dotsc,x_n$. Define a map $X\to C^T_{X}$ on objects of $\curly{E}$, taking values in $R$, via the formula

\[C^T_{X}=\prod_{i=1}^nx_i^{\ip{FX}{S_i}_1}\sum_{v\in\integ^r}\chi(\text{Gr}_v(EX))\prod_{i=1}^nx_i^{-\ip{v}{S_i}_3}.\]

Here, as before, $\text{Gr}_v(EX)$ denotes the projective variety of submodules of $EX$ with dimension vector $v$, and $\chi(\text{Gr}_v(EX))$ denotes its Euler characteristic. The modules $S_i=FT_i/\rad FT_i$ are the simple tops of the projective modules $P_i$. By \cite[Theorem~3.3]{FuKeller}, the map $X\mapsto C^T_{X}$ is a cluster character, with the property that $C^T_{T_i}=x_i$.

The cluster-tilting object $T$ also determines an index for each object $X\in\curly{E}$. To see that this quantity is well-defined we will use the following lemma, the proof of which is included for the convenience of the reader.

\begin{lemma}
\label{approximations-are-admissible}
Let $\curly{E}$ be an exact category, and let $M,T\in\curly{E}$.
\begin{itemize}
\item[(i)]If there exists an admissible epimorphism $T'\to M$ for $T'\in\add{T}$, then any right $\add{T}$-approximation of $M$ is an admissible epimorphism.
\item[(ii)]If there exists an admissible monomorphism $M\to T'$ for $T'\in\add{T}$, then any left $\add{T}$-approximation of $M$ is an admissible monomorphism.
\end{itemize}
\end{lemma}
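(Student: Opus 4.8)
The plan is to reduce both parts to the defining property of approximations together with the standard ``cancellation'' property of admissible epimorphisms and monomorphisms in an exact category. Since part (ii) is precisely the statement of part (i) read in the opposite category $\op{\curly{E}}$ (which is again exact, with the roles of admissible monomorphisms and epimorphisms interchanged, and with left and right approximations swapped), it suffices to prove (i) carefully and then dualise.

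For (i), I would let $p\colon T'\to M$ be the given admissible epimorphism with $T'\in\add T$, and let $g\colon T''\to M$ be an arbitrary right $\add T$-approximation of $M$. Because $T'\in\add T$, the defining property of the approximation $g$ supplies a factorisation $p=g\circ h$ for some morphism $h\colon T'\to T''$. In particular the composite $g\circ h=p$ is an admissible epimorphism.

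The key step is then to invoke the fact that in any exact category, if a composite $g\circ h$ is an admissible epimorphism, then the outer map $g$ is itself an admissible epimorphism (B\"uhler's survey \cite{Buehler}; this is the exact-category analogue of the elementary observation that if $g\circ h$ is surjective then $g$ is surjective). Applying this to $p=g\circ h$ yields that $g$ is an admissible epimorphism, as required. Part (ii) then follows by the dual argument: one factors the given admissible monomorphism $M\to T'$ through the left $\add T$-approximation $g\colon M\to T''$ as $i=h\circ g$, and uses that a composite $h\circ g$ being an admissible monomorphism forces the inner map $g$ to be an admissible monomorphism.

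The only point where the exact structure does real work is this cancellation property. Unlike in an abelian category, where it is immediate from the behaviour of images, in a general exact category it is a genuine consequence of the axioms (provable via a pullback/pushout argument, or simply quoted from \cite{Buehler}); I expect locating and correctly orienting this lemma to be the main---indeed essentially the only---obstacle, since everything else (the factorisation through the approximation, and the passage to $\op{\curly{E}}$ for part (ii)) is purely formal.
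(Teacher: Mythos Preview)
Your strategy is the right one, and its final step coincides with the paper's final step, but there is a genuine gap in the way you have stated the key cancellation lemma. In an arbitrary exact category it is \emph{not} true that ``$g\circ h$ admissible epi $\Rightarrow$ $g$ admissible epi''. The result that actually holds (and that the paper invokes, citing Keller's appendix and the dual of \cite[Proposition~2.16]{Buehler}) is the \emph{obscure axiom}: if $g$ \textbf{has a kernel} and some composite $g\circ h$ is an admissible epimorphism, then $g$ is an admissible epimorphism. Without the kernel hypothesis the statement already fails in any additive category that is not weakly idempotent complete: a split epimorphism $e$ with no kernel is not admissible, yet $e\circ s=1$ is. The version you want to quote unconditionally appears in B\"uhler only under the standing assumption of weak idempotent completeness.

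This is exactly why the paper does not simply factor $\pi$ through the approximation and stop. Instead it forms the pullback $X$ of $f$ and $\pi$, observes that the resulting map $g\colon X\to T'$ is split (using the approximation property, just as you do), produces from this a kernel for $f$, and only then applies the obscure axiom to conclude that $f$ is admissible. In other words, the ``pullback/pushout argument'' you mention in passing is not an alternative route to the cancellation lemma but precisely the missing ingredient your proof needs: it supplies the kernel of $g$ that the obscure axiom demands. If you are willing to add the (very mild, and satisfied in every application in the paper) hypothesis that $\curly{E}$ is weakly idempotent complete, then your short argument goes through verbatim; in the generality of the lemma as stated, you must do the extra work the paper does.
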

\begin{proof}
We prove only (i), as (ii) is dual. Pick an admissible epimorphism $\pi\colon T'\to M$ with $T'\in\add{T}$ and a right $\add{T}$-approximation $f\colon R\to M$. Consider the pullback square
\[\begin{tikzcd}[column sep=20pt]
X\arrow{r}{g}\arrow{d}{\pi'}&T'\arrow{d}{\pi}\\
R\arrow{r}{f}&M
\end{tikzcd}\]
As $f$ is a right $\add{T}$-approximation, there is a map $h\colon T'\to R$ such that the square
\[\begin{tikzcd}[column sep=20pt]
T'\arrow{r}{1}\arrow{d}{h}&T'\arrow{d}{\pi}\\
R\arrow{r}{f}&M
\end{tikzcd}\]
commutes, and so by the universal property of pullbacks, there is $g'\colon T'\to X$ such that $gg'=1$. Thus $g$ is a split epimorphism, fitting into an exact sequence
\[\begin{tikzcd}[column sep=20pt]
0\arrow{r}&K\arrow{r}{i}&X\arrow{r}{g}&T'\arrow{r}&0.
\end{tikzcd}\]
It then follows, again by the universal property of pushouts, that $\pi'i$ is a kernel of $f$. Since $f\pi'=\pi g$ is the composition of two admissible epimorphisms, $f$ is itself an admissible epimorphism by the obscure axiom \cite[A.1]{KellerCC}, \cite[(Dual of) Proposition~2.16]{Buehler}.
\end{proof}

Given an object $X\in\curly{E}$, we may pick a minimal right $\add{T}$-approximation $R_X\to X$, where $R_X$ is determined up to isomorphism by $X$ and the existence of such a morphism. Let $P\to X$ be a projective cover of $X$, which exists since $\curly{E}$ has enough projectives; this is an admissible epimorphism by definition, and $P\in\add{T}$ since $T$ is cluster-tilting. Thus by Lemma~\ref{approximations-are-admissible}, the approximation $R_X\to X$ is an admissible epimorphism, and so there is an exact sequence
\[0\to K_X\to R_X\to X\to0\]
in $\curly{E}$. Since $T$ is cluster-tilting, $K_X\in\add{T}$, and we define $\ind{T}{X}=[R_X]-[K_X]\in\mathrm{K}_0(\add{T})$. It is crucial here that $\ind{T}{X}$ is defined in $\mathrm{K}_0(\add{T})$, rather than in $\mathrm{K}_0(\curly{E})$ where it would simply be equal to $[X]$.

We also associate to $T$ the exchange matrix $B_T$ given by the first $r$ columns of the antisymmetrisation of the incidence matrix of the quiver of $\Lambda$. By definition, $B_T$ has entries
\[(B_T)_{ij}=\dim\Ext{1}{\Lambda}{S_i}{S_j}-\dim\Ext{1}{\Lambda}{S_j}{S_i}\]
for $1\leq i\leq n$ and $1\leq j\leq r$.

\begin{definition}[{cf.\ \cite[Definition~3.3]{Pressland}}]\label{d:Frob-cl-cat}
A Frobenius category $\curly{E}$ is a Frobenius cluster category if it is Krull--Schmidt, stably $2$-Calabi--Yau and satisfies $\operatorname{gldim}(\op{\End{\curly{E}}{T}})\leq 3$ for all cluster-tilting objects $T\in\curly{E}$, of which there is at least one.
\end{definition}

Note that a Frobenius cluster category $\curly{E}$ need not be Hom-finite, but the stable category $\underline{\curly{E}}$ must be, since this is part of the definition of $2$-Calabi--Yau.

Let $\curly{E}$ be a Frobenius cluster category. Let $T=\bigdsum_{i=1}^n T_{i}\in \curly{E}$ be a basic cluster-tilting object, where each $T_i$ is indecomposable and is projective-injective if and only if $i>r$, let $\Lambda=\op{\End{\curly{E}}{T}}$ be its endomorphism algebra, and let $\underline{\Lambda}=\op{\End{\underline{\curly{E}}}{T}}$ be its stable endomorphism algebra. We continue to write $F=\Hom{\curly{E}}{T}{-}\colon \curly{E}\to\fgmod{\Lambda}$ and $E=\Ext{1}{\curly{E}}{T}{-}\colon\curly{E}\to\fgmod{\underline{\Lambda}}$. Since $\underline{\curly{E}}$ is Hom-finite, $\underline{\Lambda}$ is a finite dimensional algebra.

The Krull--Schmidt property for $\curly{E}$ is equivalent to $\curly{E}$ being idempotent complete and having the property that the endomorphism algebra $A$ of any of its objects is a semiperfect ring \cite[Corollary~4.4]{KrauseKS}, meaning there are a complete set $\{e_i:i\in I\}$ of pairwise orthogonal idempotents of $A$ such that $e_iAe_i$ is local for each $i\in I$. For many representation-theoretic purposes, semiperfect $\mathbb{K}$-algebras behave in much the same way as finite dimensional ones; for example, if $A$ is semiperfect then the quotient $A/\rad{A}$ is semi-simple, and its idempotents lift to $A$. For more background on semiperfect rings, see, for example, Anderson and Fuller \cite[Chapter~27]{AndersonFuller-Book}. 

For us, a key property of a semiperfect ring $A$ is that the $A$-modules $Ae_i/\rad{Ae_i}$ (respectively, their projective covers $Ae_i$) form a complete set of finite dimensional simple $A$-modules (respectively indecomposable projective $A$-modules) up to isomorphism \cite[Proposition~27.10]{AndersonFuller-Book}.  As we will require this later, we include being Krull--Schmidt in our definition of a Frobenius cluster category, noting that other work in this area---notably the original definition in \cite{Pressland}---requires only idempotent completeness. 

Since $\Lambda$ is Noetherian and $\operatorname{gldim}{\Lambda}\leq 3$, the Euler form
\[\ip{M}{N}_e=\sum_{i\geq0}(-1)^i\dim\Ext{i}{\Lambda}{M}{N}\]
is well-defined as a map $\mathrm{K}_0(\fgmod\Lambda)\times \mathrm{K}_0(\fd\Lambda)\to\integ$, and coincides with the form $\ip{-}{-}_3$ introduced earlier.

\begin{remark}
\label{exchangematrixgivesip}
By a result of Keller and Reiten \cite[\S4]{KellerReiten} (see also \cite[Theorem~3.4]{Pressland}), $\fgmod{\Lambda}$ has enough $3$-Calabi--Yau symmetry for us to deduce that $\dim\Ext{k}{\Lambda}{S_i}{S_j}=\dim\Ext{3-k}{\Lambda}{S_j}{S_i}$ when $1\leq j\leq r$. It follows that
\[(-B_T)_{ij}=\ip{S_i}{S_j}_3=\ip{S_i}{S_j},\]
so the matrix of $\ip{-}{-}$, when restricted to the span of the simple modules in the first entry and the span of the first $r$ simple modules in the second entry, is given by $-B_T$.
\end{remark}

One can show by taking projective resolutions that the classes $[P_i]$ of indecomposable projective $\Lambda$-modules span $\mathrm{K}_0(\fgmod{\Lambda})$. Moreover, since $\ip{P_i}{S_j}_e=\delta_{ij}$, any $x\in \mathrm{K}_0(\fgmod{\Lambda})$ has a unique expression
\[x=\sum_{i=1}^n\ip{x}{S_{i}}_{e}[P_i]\]
as a linear combination of the $[P_i]$, and so these classes in fact freely generate $\mathrm{K}_0(\fgmod{\Lambda})$.

Recall from the definition of the index that if $X\in\curly{E}$, there is an exact sequence
\[0\to K_X\to R_X\to X\to 0\]
in which $K_X$ and $R_X$ lie in $\add{T}$. Since $E$ vanishes on $\add{T}$, the functor $F$ takes the above sequence to a projective resolution
\[0\to FK_X\to FR_X\to FX\to 0\]
of $FX$ in $\fgmod{\Lambda}$. Thus $FX$ has projective dimension at most $1$, and so $\ip{FX}{-}_1=\ip{FX}{-}_e$. We can therefore rewrite the cluster character of $X$ as
\[C^T_{X}=\prod_{i=1}^nx_i^{\ip{FX}{S_i}_e}\sum_{v\in\integ^r}\chi(\text{Gr}_v(EX))\prod_{i=1}^nx_i^{-\ip{v}{S_i}_e}.\]

We now proceed to defining gradings for Frobenius cluster categories.  We can follow the same approach as in the triangulated case, using the index.  However, by \cite{FuKeller}, we have the following expansion of the index in terms of the classes of the indecomposable summands of $T$:
\[ \ind{T}{X}=\sum_{i=1}^n \ip{FX}{S_i}_{e}[T_{i}]\in \mathrm{K}_0(\add{T}). \]
Since $\Ext{1}{\Lambda}{T}{T}=0$, there are no non-split exact sequences in $\add{T}$, and so $\mathrm{K}_0(\add{T})$ is freely generated by the $[T_i]$. For the same reason, the functor $F$ is exact when restricted to $\add{T}$, and so induces a map $F_{\ast}\colon K_{0}(\add{T}) \to K_{0}(\fgmod{\Lambda})$, which takes $[T_i]$ to $[P_i]$, and so is an isomorphism. Applying this isomorphism to the above formula, we obtain $F_{\ast}(\ind{T}{X})=\sum \ip{FX}{S_{i}}_{e}[P_{i}]=[FX]$.

From this we see that if we wish to work concretely with matrix and vector entries, the index can be computed explicitly.  For the general theory, however, the equivalent K-theoretic expression is cleaner and so we shall phrase our definition of grading in those terms, the above observation showing us that this is equivalent to the approach in \cite{GradedCAs}.

We will define our $\abgp$-gradings to be certain elements of $\mathrm{K}_0(\fd{\Lambda})\otimes_\integ\abgp$. To state a suitable compatibility condition, it will be necessary to extend the Euler form to an $\integ$-bilinear form $\mathrm{K}_0(\fgmod{\Lambda})\times(\mathrm{K}_0(\fd{\Lambda})\otimes_\integ\abgp)\to\abgp$. In the by now familiar way, we do this using the $\integ$-module structure on $\abgp$, and, abusing notation, define
\[\ip{x}{\sum y_i\otimes a_i}_e=\sum \ip{x}{y_i}_ea_i.\]
It is straightforward to check that this form is well-defined and $\integ$-linear in each variable.

Thus we arrive at the following definition of a graded Frobenius cluster category, exactly analogous to Definitions~\ref{d:graded-gen-cl-cat} and \ref{d:degree-graded-gen-cl-cat} in the triangulated case.

\begin{definition} Let $\curly{E}$ be a Frobenius cluster category and $T$ a cluster-tilting object of $\curly{E}$ such that $\Lambda=\op{\End{\curly{E}}{T}}$ is Noetherian. We  say that $G\in\mathrm{K}_0(\fd{\Lambda})\otimes_\integ\abgp$ is a grading for $\curly{E}$ if $\ip{M}{G}_{e}=0$ for all $M\in\fgmod{\underline{\Lambda}}$.  We call $(\curly{E},T,G)$ a graded Frobenius cluster category.
\end{definition}

\begin{definition} Let $(\curly{E},T,G)$ be a graded Frobenius cluster category.  Define $\deg_{G}\colon \curly{E} \to \abgp$ by $\deg_{G}(X)=\ip{FX}{G}_{e}$.
\end{definition}

We record some straightforward consequences of the above definitions.

\begin{remark}\label{grading-remarks} {\ }
\begin{enumerate}[label=(\roman*)]
\item When considering $\integ$-gradings, we may use the natural isomorphism $\mathrm{K}_0(\fd\Lambda)\otimes_\integ\integ\stackrel{\sim}{\to}\mathrm{K}_0(\fd\Lambda)$ to think of a grading as an element of the Grothendieck group itself. Similarly, we can think of $\integ^m$-gradings as elements of $\mathrm{K}_0(\fd\Lambda)^m$.
\item Using the basis of simples for $\mathrm{K}_0(\fd{\Lambda})$, we can write $G=\sum_{i=1}^n[S_i]\otimes G_i$ for some unique $G_i\in\abgp$. Writing $\underline{G}\in\abgp^n$ for the column vector with entries $G_i$, the grading condition is equivalent to requiring $B_T^t\underline{G}=0$, by Remark~\ref{exchangematrixgivesip} and the assumption that $\underline{\Lambda}$ is finite dimensional.
\item Let $G_i$ be as in (ii). Since $FT_{i}=P_{i}$ and $\ip{P_i}{S_j}_{e}=\delta_{ij}$, we may compute
\[\deg_{G}(T_{i})=\ip{FT_{i}}{G}_{e}=G_i,\]
as expected.
\end{enumerate}
\end{remark}

The K-theoretic phrasing of the above definition leads us to the following observation.

\begin{lemma}\label{l:proj-inj-grading} Let $\curly{E}$ be Hom-finite, let $T\in\curly{E}$ be a cluster-tilting object with endomorphism algebra $\Lambda$ and let $V\in\curly{E}$ be projective-injective. Write $F=\Hom{\curly{E}}{T}{-}$. Then $[FV]\in \mathrm{K}_{0}(\fd \Lambda)$ is a $\integ$-grading for $\curly{E}$, and $\deg_{[FV]}(X)=\dim\Hom{\curly{E}}{X}{V}$.
\end{lemma}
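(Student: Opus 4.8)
The plan is to treat the two assertions in turn: that $[FV]$ satisfies the grading condition, and that $\deg_{[FV]}(X)=\dim\Hom{\curly{E}}{X}{V}$. Since $\curly{E}$ is assumed Hom-finite, $\Lambda$ is finite dimensional, so $\fd\Lambda=\fgmod\Lambda$ and no bookkeeping between the two Grothendieck groups is needed. The underlying observation driving both parts is that $V$ is injective, which controls $\Ext{1}{\curly{E}}{-}{V}$.

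For the grading condition, I would first note that, being injective, $V$ satisfies $\Ext{1}{\curly{E}}{T}{V}=0$, so $V\in\add{T}$; as the projective-injective indecomposable summands of $T$ are exactly $T_{r+1},\dotsc,T_n$, in fact $V\in\add\{T_{r+1},\dotsc,T_n\}$, and hence $FV$ is a projective $\Lambda$-module lying in $\add\{P_{r+1},\dotsc,P_n\}$, involving no $P_k$ with $k\leq r$. I would then appeal to Remark~\ref{grading-remarks}(ii), which reduces the grading condition to checking $B_T^t\underline{G}=0$, where $\underline{G}$ records the coefficients of $[FV]$ in the basis of simples. Rewriting $(B_T)_{ik}=-\ip{S_i}{S_k}_{e}$ via Remark~\ref{exchangematrixgivesip} and using linearity of the Euler form, the $k$th entry becomes $-\ip{FV}{S_k}_{e}=-\dim\Hom{\Lambda}{FV}{S_k}$ for $1\leq k\leq r$ (the higher Ext-terms vanishing as $FV$ is projective). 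Since $FV$ has no summand $P_k$ with $k\leq r$, this dimension is zero, establishing the grading condition.

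For the degree formula, I would begin from the defining exact sequence of the index, $0\to K_X\to R_X\to X\to 0$ with $K_X,R_X\in\add{T}$, which $F$ carries to the projective resolution $0\to FK_X\to FR_X\to FX\to 0$. As $FX$ then has projective dimension at most $1$, this resolution computes $\ip{FX}{FV}_{e}=\dim\Hom{\Lambda}{FR_X}{FV}-\dim\Hom{\Lambda}{FK_X}{FV}$. The equivalence between $\add{T}$ and the projective $\Lambda$-modules induced by $F$ supplies natural isomorphisms $\Hom{\Lambda}{FT'}{FV}\iso\Hom{\curly{E}}{T'}{V}$ for $T'\in\add{T}$, so this difference equals $\dim\Hom{\curly{E}}{R_X}{V}-\dim\Hom{\curly{E}}{K_X}{V}$. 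Finally I would apply $\Hom{\curly{E}}{-}{V}$ to the index sequence: because $V$ is injective the connecting maps into $\Ext{1}{\curly{E}}{-}{V}$ vanish, leaving the short exact sequence $0\to\Hom{\curly{E}}{X}{V}\to\Hom{\curly{E}}{R_X}{V}\to\Hom{\curly{E}}{K_X}{V}\to 0$, whose additivity of dimensions gives $\ip{FX}{FV}_{e}=\dim\Hom{\curly{E}}{X}{V}$; and $\deg_{[FV]}(X)$ is by definition $\ip{FX}{FV}_{e}$.

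The step I expect to require the most care is the projectivization isomorphism $\Hom{\Lambda}{FT'}{FV}\iso\Hom{\curly{E}}{T'}{V}$, which lets one pass between $\Lambda$-module and $\curly{E}$-morphism computations; this reduces to the case $T'=T$, where both sides are $FV$, and then extends additively over $\add{T}$. Once that identification is secured, the injectivity of $V$ does the rest cleanly, while the grading part needs only the correct invocation of Remark~\ref{grading-remarks}(ii), which already encodes the relevant $3$-Calabi--Yau symmetry.
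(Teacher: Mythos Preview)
Your proof is correct and follows essentially the same route as the paper. Both parts rely on the same ingredients: $V\in\add T$ (since $V$ is projective-injective and $T$ is cluster-tilting), the equivalence $F|_{\add T}$, and the injectivity of $V$ to control $\Hom{\curly{E}}{-}{V}$ on the index sequence. The only presentational differences are that, for the grading condition, the paper checks $\ip{FV}{M}_{e}=0$ directly for all $M\in\fgmod\underline{\Lambda}$ rather than passing through the matrix equation $B_T^t\underline{G}=0$ (these are equivalent via the simples $S_k$, $k\leq r$, generating $\mathrm{K}_0(\fgmod\underline{\Lambda})$), and for the degree formula, the paper compares kernels to obtain $\Hom{\curly{E}}{X}{V}\cong\Hom{\Lambda}{FX}{FV}$ and asserts $\deg_{[FV]}(X)=\dim\Hom{\Lambda}{FX}{FV}$, whereas your version computes $\ip{FX}{FV}_{e}$ explicitly as the difference $\dim\Hom{\Lambda}{FR_X}{FV}-\dim\Hom{\Lambda}{FK_X}{FV}$ before invoking injectivity of $V$---arguably a cleaner bookkeeping of why the higher Ext-term does not contribute.
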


\begin{proof} Letting $M\in \fgmod \underline{\Lambda}$, we need to check that $\ip{M}{FV}_{e}=0$.  By the internal Calabi--Yau property of $\fgmod \Lambda$ (see Remark~\ref{exchangematrixgivesip}), we may instead check that $\ip{FV}{M}_{e}=0$.  Firstly, $\Ext{i}{\Lambda}{FV}{M}=0$ for $i>0$ since $FV$ is projective.

Recall from above that there is an idempotent $e\in\Lambda$, given by projecting onto a maximal projective summand of $T$, such that $\underline{\Lambda}=\Lambda/\Lambda e\Lambda$.  Using this, $FV\in\add\Lambda e$ by the definition of $e$, and $\Hom{\Lambda}{\Lambda e}{M}=eM=0$ since $M$ is a $\underline{\Lambda}$-module.  Hence $\Hom{\Lambda}{FV}{M}=0$ also, so that $\ip{FV}{M}_{e}=\ip{M}{FV}_{e}=0$ as required.

By definition, $\deg_{[FV]}(X)=\dim\Hom{\Lambda}{FX}{FV}$ for $X\in\curly{E}$. Since $T$ is cluster-tilting, we have the short exact sequence
\[0\to K_X\to R_X\to X\to 0,\]
with $K_X,R_X\in\add{T}$, used to define the index. Applying $\Hom{\curly{E}}{-}{V}$, we obtain the exact sequence
\[0\to\Hom{\curly{E}}{X}{V}\to\Hom{\curly{E}}{R_X}{V}\to\Hom{\curly{E}}{K_X}{V}.\]
Alternatively, we can apply $\Hom{\Lambda}{F{-}}{FV}$ to obtain the exact sequence
\[0\to\Hom{\Lambda}{FX}{FV}\to\Hom{\Lambda}{FR_X}{FV}\to\Hom{\Lambda}{FK_X}{FV}.\]
Since $F$ restricts to an equivalence on $\add{T}$, and $V\in\add{T}$ since it is projective-injective, the right-hand maps in these two exact sequences are isomorphic, yielding an isomorphism $\Hom{\curly{E}}{X}{V}\cong\Hom{\Lambda}{FX}{FV}$ of their kernels, from which the result follows.
\end{proof}

This gives us a family of $\integ$-gradings canonically associated to any Hom-finite Frobenius cluster category; note that in fact we only need $FV=\Hom{\curly{E}}{T}{V}\in \fd \Lambda$, so for some specific Hom-infinite $\curly{E}$ and specific $V$ and $T$ the result may still hold.  

We will give some more examples of gradings later but first give the main results regarding graded Frobenius cluster categories, analogous to those in Proposition~\ref{p:prop-of-gen-cc} for the triangulated case.  We treat the straightforward parts first.

\begin{proposition} Let $(\curly{E},T,G)$ be a graded Frobenius cluster category.
\begin{enumerate}[label=(\roman*)]
\item Let $\mathbb{C}[x_{1}^{\pm 1},\dotsc ,x_{n}^{\pm 1}]$ be graded by $\deg_{G}(x_{j})=G_i$, where $G_i$ is defined as in Remark~\ref{grading-remarks}(ii). Then for all $X \in \curly{E}$, the cluster character $C_{X}^{T}\in \mathbb{C}[x_{1}^{\pm 1},\dotsc ,x_{n}^{\pm 1}]$ is homogeneous of degree $\deg_G(X)$.
\item\label{p:prop-of-gen-cc-additive-on-exact-seq} For any exact sequence $0\to X\to Y \to Z \to 0$ in $\curly{E}$, we have
\[ \deg_{G}(Y)=\deg_{G}(X)+\deg_{G}(Z). \]
\item The degree $\deg_{G}$ is compatible with mutation in the sense that for every cluster-tilting object $U$ of $\curly{E}$ with indecomposable summand $U_{k}$ we have
\[ \deg_{G}(U_{k}^{*})=\deg_{G}(M)-\deg_{G}(U_{k})=\deg_{G}(M')-\deg_{G}(U_{k}), \]
where $U_{k}^{*}$, $M$ and $M'$ are as in the above description of exchange sequences in $\curly{E}$. It follows that $\deg_G(M)=\deg_G(M')$, which is the categorical version of the claim that all exchange relations in a graded cluster algebra are homogeneous.
\end{enumerate}
\end{proposition}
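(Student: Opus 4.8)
The plan is to handle the three parts in order, with part (ii) carrying the real content and part (iii) following formally from it. The common engine is the defining property of a grading, namely $\ip{M}{G}_{e}=0$ for all $M\in\fgmod{\underline{\Lambda}}$, combined with the observation that $\deg_{G}(X)=\ip{FX}{G}_{e}$ depends only on the class $[FX]\in\mathrm{K}_{0}(\fgmod{\Lambda})$, since the Euler form descends to K-theory and is $\integ$-linear in each variable. Thus $\deg_{G}$ will be additive on a short exact sequence precisely when the alternating sum of the $F$-classes of its terms is the class of an object of $\fgmod{\underline{\Lambda}}$.

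For part (i), I would read off the $\abgp$-degree of each monomial in
\[C^T_{X}=\prod_{i=1}^nx_i^{\ip{FX}{S_i}_e}\sum_{v}\chi(\text{Gr}_v(EX))\prod_{i=1}^nx_i^{-\ip{v}{S_i}_e}.\]
Using $\deg_G(x_i)=G_i$ and $G=\sum_i[S_i]\otimes G_i$, the leading monomial has degree $\sum_i\ip{FX}{S_i}_eG_i=\ip{FX}{G}_e=\deg_{G}(X)$. For any $v$ contributing a nonzero term, $\text{Gr}_v(EX)$ is nonempty, so $v$ is the dimension vector of a genuine submodule $M_v\in\fgmod{\underline{\Lambda}}$ of $EX$; the associated monomial then has degree $-\sum_i\ip{v}{S_i}_eG_i=-\ip{M_v}{G}_e=0$ by the grading condition. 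Hence every summand is homogeneous of degree $0$ and $C^T_X$ is homogeneous of degree $\deg_G(X)$.

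Part (ii) is the crux. Applying the left-exact functor $F=\Hom{\curly{E}}{T}{-}$ to $0\to X\to Y\to Z\to 0$, together with the connecting map into $E=\Ext{1}{\curly{E}}{T}{-}$, produces the exact sequence
\[0\to FX\to FY\to FZ\to EX.\]
Let $I$ be the image of $FY\to FZ$ and $C$ the cokernel; the connecting map identifies $C$ with a submodule of $EX$, so $C\in\fgmod{\underline{\Lambda}}$. This four-term sequence breaks into the short exact sequences $0\to FX\to FY\to I\to 0$ and $0\to I\to FZ\to C\to 0$ of finitely generated $\Lambda$-modules, whence $[FX]-[FY]+[FZ]=[C]$ in $\mathrm{K}_{0}(\fgmod{\Lambda})$. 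Pairing with $G$ and invoking the grading condition on $C$ gives $\deg_G(X)-\deg_G(Y)+\deg_G(Z)=\ip{C}{G}_e=0$, as required. The step needing the most care is exactly this truncation: one must not attempt to run the full long exact sequence of the higher $\Ext{i}{\curly{E}}{T}{-}$, which need not terminate, but instead cut it off after the single $E$-term, whose appearance is precisely what the grading condition is designed to annihilate.

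Finally, part (iii) is a formal consequence of (ii). Since $\deg_G$ is defined on every object of $\curly{E}$ through the fixed $T$, I may apply additivity directly to the exchange sequences associated with the possibly different cluster-tilting object $U$. From $0\to U_k^*\to M\to U_k\to 0$ I obtain $\deg_G(M)=\deg_G(U_k^*)+\deg_G(U_k)$, and from $0\to U_k\to M'\to U_k^*\to 0$ I obtain $\deg_G(M')=\deg_G(U_k)+\deg_G(U_k^*)$. Rearranging each gives the two stated expressions for $\deg_G(U_k^*)$, and comparing them yields $\deg_G(M)=\deg_G(M')$.
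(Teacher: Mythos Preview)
Your proof is correct and follows essentially the same route as the paper. In each part the key idea matches: for (i) you use that any contributing $v$ is the dimension vector of a genuine $\underline{\Lambda}$-module, so $\ip{v}{G}_e=0$; for (ii) you truncate the long exact sequence after one $E$-term to obtain a four-term exact sequence with cokernel in $\fgmod{\underline{\Lambda}}$ (the paper's $M$ is your $C$) and read off the K-theoretic identity; and (iii) is deduced from (ii) by applying additivity to the two exchange sequences.
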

\begin{proof} {\ }
\begin{enumerate}[label=(\roman*)]
\item As usual, for $v\in\integ^n$ we write $\underline{x}^v=\prod_{i=1}^nx_i^{v_i}$. Then if $\deg_{G}(x_i)=G_i$, we have
\[\deg_{G}(\underline{x}^v)=\sum_{i=1}^nv_iG_i=\ip{\sum_{i=1}^nv_i[P_i]}{G}_{e}.\]
Each term of $C_X^T$ may be written in the form form $\lambda \underline{x}^v$, where
\[v_i=\ip{FX}{S_i}_{e}-\ip{M}{S_i}_{e}\]
for some $M\in\fgmod{\underline{\Lambda}}$, and $\lambda$ is a constant. It follows that
\[\sum_{i=1}^nv_i[P_i]=[FX]-[M],\]
so the degree of $\underline{x}^v$ is
\[\ip{FX}{G}_{e}-\ip{M}{G}_{e}=\ip{FX}{G}_{e}=\deg_{G}(X),\]
since $\ip{M}{G}_{e}=0$ by the definition of a grading. In particular, this is independent of $M$, so $C^T_X$ is homogeneous of degree $\deg_{G}(X)$.

\item Applying $F$ to the exact sequence $0\to X\to Y\to Z\to 0$ and truncating gives an exact sequence
\[0\to FX\to FY\to FZ\to M\to0\]
for some $M\subseteq EX$. In particular, $M\in\fgmod{\underline{\Lambda}}$. In $\mathrm{K}_0(\fgmod{\Lambda})$, we have
\[[FX]+[FZ]=[FY]+[M],\]
so applying $\ip{-}{G}_e$ gives
\[\deg_G(X)+\deg_G(Z)=\deg_G(Y)+\ip{M}{G}_e=\deg_G(Y)\]
since $M\in\fgmod{\underline{\Lambda}}$.

\item This follows directly from (ii) applied to the exchange sequences
\[0\to U_{k}^{*} \to M \to U_{k} \to 0 \qquad \text{and} \qquad 0\to U_{k} \to M' \to U_{k}^{*} \to 0.\qedhere\]
\end{enumerate}
\end{proof}

Since the shift functor on $\underline{\curly{E}}$ does not typically lift to an automorphism of $\curly{E}$, and projective-injective objects of $\curly{E}$ may have non-zero degrees, we have no natural analogue of Proposition~\ref{p:prop-of-gen-cc}(v) in the Frobenius setting. It remains to give an analogue of part~\ref{p:prop-of-gen-cc-Groth-gp}, concerning the relationship between gradings and the Grothendieck group of a graded Frobenius cluster category.  The first part of the following theorem is directly analogous to \cite[Theorem 10]{Palu-Groth-gp} for the triangulated case.

\begin{theorem}\label{t:grading-Groth-gp}
  Let $\curly{E}$ be a Frobenius cluster category with a cluster-tilting object $T$ such that $\Lambda=\op{\End{\curly{E}}{T}}$ is Noetherian.
  \begin{enumerate}[label=(\roman*)]
    \item\label{t:grading-Groth-gp-relns} The Grothendieck group $\mathrm{K}_{0}(\curly{E})$, as an exact category, is isomorphic to the quotient of $\mathrm{K}_{0}(\add_{\curly{E}} T)$ by the relations $[X_{k}]-[Y_{k}]$, for $1\leq k\leq r$, where \[0\to U_{k}^{*} \to Y_k \to U_{k} \to 0 \qquad \text{and} \qquad 0\to U_{k} \to X_k \to U_{k}^{*} \to 0\]
      are the exchange sequences associated to the summand $U_k$ of $T$.
    \item\label{t:grading-Groth-gp-grading-space} The space of $\abgp$-gradings of $\curly{E}$, defined above as a subspace of $\mathrm{K}_0(\fd{\Lambda})\otimes_\integ\abgp$, is isomorphic to $\Hom{\integ}{\mathrm{K}_0(\curly{E})}{\abgp}$, via the map $G\mapsto\deg_G$.
      \end{enumerate}
\end{theorem}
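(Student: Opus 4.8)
The plan is to establish (i) first and then read off (ii) by dualising with the Euler form. Throughout, write $F_{\ast}\colon\mathrm{K}_0(\add_{\curly{E}}T)\xrightarrow{\sim}\mathrm{K}_0(\fgmod{\Lambda})$ for the isomorphism $[T_i]\mapsto[P_i]$ discussed before the theorem, under which $\ind{T}{X}$ corresponds to $[FX]$. For (i), I would first observe that the inclusion $\add_{\curly{E}}T\hookrightarrow\curly{E}$ induces a homomorphism $\pi\colon\mathrm{K}_0(\add_{\curly{E}}T)\to\mathrm{K}_0(\curly{E})$ (well defined since all sequences in $\add_{\curly{E}}T$ split), which is surjective because the index sequence $0\to K_X\to R_X\to X\to0$ gives $[X]=\pi(\ind{T}{X})$ for every $X$. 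Writing $R$ for the subgroup generated by the classes $[X_k]-[Y_k]$, the two exchange sequences give $[X_k]=[U_k]+[U_k^{*}]=[Y_k]$ in $\mathrm{K}_0(\curly{E})$, so $R\leq\ker\pi$ and $\pi$ induces a surjection $\overline{\pi}\colon\mathrm{K}_0(\add_{\curly{E}}T)/R\to\mathrm{K}_0(\curly{E})$; it remains to invert it.

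The crux is to identify $F_{\ast}(R)$ with the image $J$ of $\mathrm{K}_0(\fgmod{\underline{\Lambda}})\to\mathrm{K}_0(\fgmod{\Lambda})$. Applying $F$ to the exchange sequences and using $\Ext{1}{\curly{E}}{T}{\add_{\curly{E}}T}=0$ produces exact sequences $0\to FU_k^{*}\to FY_k\to FU_k\to EU_k^{*}\to0$ and $0\to FU_k\to FX_k\to FU_k^{*}\to0$, so that $F_{\ast}([X_k]-[Y_k])=[EU_k^{*}]$. Here $EU_k^{*}=\operatorname{coker}(FY_k\to P_k)$ with $P_k=FU_k$; since $Y_k\to U_k$ is a minimal right $\add(T/U_k)$-approximation, this image is exactly $\rad P_k$, whence $EU_k^{*}\cong S_k$. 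As $S_1,\dots,S_r$ are precisely the simple $\underline{\Lambda}$-modules and $\mathrm{K}_0(\fgmod{\underline{\Lambda}})$ is free on them, their classes generate $J$, giving $F_{\ast}(R)=J$. I would then define $\psi\colon\mathrm{K}_0(\curly{E})\to\mathrm{K}_0(\add_{\curly{E}}T)/R$ on generators by $\psi([X])=\overline{\ind{T}{X}}$: applying $F$ to a short exact sequence $0\to X\to Y\to Z\to0$ and truncating yields $0\to FX\to FY\to FZ\to M\to0$ with $M\subseteq EX$, hence $M\in\fgmod{\underline{\Lambda}}$, so $F_{\ast}(\ind{T}{X}-\ind{T}{Y}+\ind{T}{Z})=[M]\in J=F_{\ast}(R)$. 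Thus $\psi$ respects the defining relations and is well defined, and $\psi\overline{\pi}$, $\overline{\pi}\psi$ are both the identity (the first using $\ind{T}{T_i}=[T_i]$, the second because $\overline{\pi}(\overline{\ind{T}{X}})=[X]$), proving (i).

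For (ii), additivity of $\deg_G$ on short exact sequences (the preceding Proposition) shows that $\deg_G$ factors through $\mathrm{K}_0(\curly{E})$, so $G\mapsto\deg_G$ genuinely takes values in $\Hom{\integ}{\mathrm{K}_0(\curly{E})}{\abgp}$. Because $\ip{P_i}{S_j}_{e}=\delta_{ij}$, the Euler form is a perfect pairing of the free groups $\mathrm{K}_0(\fgmod{\Lambda})$ and $\mathrm{K}_0(\fd{\Lambda})$; tensoring with $\abgp$ gives an isomorphism $\mathrm{K}_0(\fd{\Lambda})\otimes_{\integ}\abgp\xrightarrow{\sim}\Hom{\integ}{\mathrm{K}_0(\fgmod{\Lambda})}{\abgp}$, $G\mapsto\ip{-}{G}_{e}$. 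Under it, the grading condition $\ip{M}{G}_{e}=0$ for all $M\in\fgmod{\underline{\Lambda}}$ says exactly that $\ip{-}{G}_{e}$ annihilates $J$, so the space of gradings is carried isomorphically onto $\Hom{\integ}{\mathrm{K}_0(\fgmod{\Lambda})/J}{\abgp}$. By (i) together with $F_{\ast}(R)=J$ we have $\mathrm{K}_0(\curly{E})\cong\mathrm{K}_0(\fgmod{\Lambda})/J$ via $[X]\mapsto[FX]$, and chasing the identifications shows $G$ is sent to the homomorphism $[X]\mapsto\ip{FX}{G}_{e}=\deg_G(X)$, which is precisely $\deg_G$.

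The main obstacle is the identification $F_{\ast}(R)=J$. This rests on the computation $\Ext{1}{\curly{E}}{T}{U_k^{*}}\cong S_k$—equivalently, that the approximation map has image exactly $\rad P_k$—and on the four-term sequence $0\to FX\to FY\to FZ\to M\to0$ exhibiting the failure of the index to be additive as a class lying in $\fgmod{\underline{\Lambda}}$. Granting these, the remainder is formal: surjectivity of $\pi$, the inclusion $R\leq\ker\pi$, the construction of $\psi$, and the passage to (ii) use only that $F$ is exact on $\add_{\curly{E}}T$ with $F_{\ast}\ind{T}{X}=[FX]$, together with the perfect pairing $\ip{P_i}{S_j}_{e}=\delta_{ij}$.
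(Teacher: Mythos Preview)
Your argument is essentially correct and takes a genuinely different route from the paper's. The paper proves (i) by invoking Palu's localisation sequence of triangulated categories
\[
0\to \curly{H}^b_{\curly{E}\text{-ac}}(\add_{\curly{E}}T)\to \curly{H}^b(\add_{\curly{E}}T)\to \curly{D}^b(\curly{E})\to 0,
\]
applying $\mathrm{K}_0$, and then using Palu's identification $\mathrm{K}_0(\curly{H}^b_{\curly{E}\text{-ac}}(\add_{\curly{E}}T))\cong\mathrm{K}_0(\fgmod\underline{\Lambda})$ to read off the relations. You instead construct the surjection $\pi$ and an explicit inverse $\psi$ by hand, using only the index sequence and the four-term sequence $0\to FX\to FY\to FZ\to M\to 0$. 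This is more elementary: it avoids bounded homotopy and derived categories altogether, and makes the role of the identity $F_\ast(R)=J$ completely transparent. The paper's approach, by contrast, gets the right-exact sequence $\mathrm{K}_0(\fgmod\underline{\Lambda})\to\mathrm{K}_0(\add T)\to\mathrm{K}_0(\curly{E})\to 0$ ``for free'' from general principles, and then only needs to compute the connecting map on generators. For (ii), your argument and the paper's are essentially the same dualisation via the Euler pairing.

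There is one imprecision worth flagging. Your justification of $EU_k^*\cong S_k$ reads: ``since $Y_k\to U_k$ is a minimal right $\add(T/U_k)$-approximation, this image is exactly $\rad P_k$''. First, minimality need not hold after lifting from $\underline{\curly{E}}$ to $\curly{E}$ (projective summands may appear), although you do not actually use it. More substantively, the approximation property only guarantees that maps from $\add(T/U_k)$ factor through $Y_k$; it does not immediately show that every \emph{radical endomorphism} $U_k\to U_k$ does so, which is what you need to get the whole of $\rad P_k$. The cleanest fix is to compute $EU_k^*=\Ext{1}{\curly{E}}{T}{U_k^*}$ directly: for $i\ne k$ one has $\Ext{1}{\curly{E}}{T_i}{U_k^*}=0$ since both $T_i$ and $U_k^*$ are summands of the cluster-tilting object $(T/U_k)\oplus U_k^*$, while $\dim\Ext{1}{\curly{E}}{U_k}{U_k^*}=1$ by the standard exchange-pair result (Iyama--Yoshino). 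This yields $EU_k^*\cong S_k$ without appealing to minimality or to the structure of $\rad P_k$. With this adjustment, your proof of (i) goes through and your identification $F_\ast(R)=J$ is exactly what is needed to make $\psi$ well defined.
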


\begin{proof}
Let $\curly{H}^b(\add_{\curly{E}}{T})$ denote the bounded homotopy category of complexes with terms in $\add_{\curly{E}}{T}$, and let $\curly{H}^b_{\curly{E}\text{-ac}}(\add_{\curly{E}}{T})$ denote the full subcategory of $\curly{E}$-acyclic complexes. By work of Palu \cite[Lemma~2]{Palu-Groth-gp}, there is an exact sequence
\[\begin{tikzcd}[column sep=20pt]
0\arrow{r}&\curly{H}^b_{\curly{E}\text{-ac}}(\add_{\curly{E}}{T})\arrow{r}&\curly{H}^b(\add_{\curly{E}}{T})\arrow{r}&\curly{D}^b(\curly{E})\arrow{r}&0,
\end{tikzcd}\]
of triangulated categories, to which we may apply the right exact functor $\mathrm{K}_0$ to obtain
\[\begin{tikzcd}[column sep=20pt]
\mathrm{K}_0(\curly{H}^b_{\curly{E}\text{-ac}}(\add_{\curly{E}}{T}))\arrow{r}&\mathrm{K}_0(\curly{H}^b(\add_{\curly{E}}{T}))\arrow{r}&\mathrm{K}_0(\curly{D}^b(\curly{E}))\arrow{r}&0.
\end{tikzcd}\]

By \cite[Proof of Lemma~9]{Palu-Groth-gp}, there is a natural isomorphism $\mathrm{K}_0(\curly{H}^b_{\curly{E}\text{-ac}}(\add_{\curly{E}}{T}))\stackrel{\sim}{\to}\mathrm{K}_0(\fgmod{\underline{\Lambda}})$. Moreover, since $T$ is cluster-tilting, there are no non-split exact sequences in $\add{T}$, and so $\mathrm{K}_0(\add{T})$ is freely generated by the indecomposable summands of $T$. Thus taking the alternating sum of terms gives an isomorphism $\mathrm{K}_0(\curly{H}^b(\add_{\curly{E}}{T}))\stackrel{\sim}{\to}\mathrm{K}_0(\add_{\curly{E}}{T})$ \cite{Rose-Note}.

These isomorphisms induce a commutative diagram
\[\begin{tikzcd}[column sep=20pt]
\mathrm{K}_0(\curly{H}^b_{\curly{E}\text{-ac}}(\add_{\curly{E}}{T}))\arrow{r}\arrow{d}&\mathrm{K}_0(\curly{H}^b(\add_{\curly{E}}{T}))\arrow{r}\arrow{d}&\mathrm{K}_0(\curly{D}^b(\curly{E}))\arrow{r}\arrow{d}&0\\
\mathrm{K}_0(\fgmod{\underline{\Lambda}})\arrow{r}{\varphi}&\mathrm{K}_0(\add_{\curly{E}}{T})\arrow{r}&\mathrm{K}_0(\curly{E})\arrow{r}&0
\end{tikzcd}\]
with exact rows. Since the two leftmost vertical maps are isomorphisms, the induced map $\mathrm{K}_0(\curly{D}^b(\curly{E}))\to\mathrm{K}_0(\curly{E})$, which is again given by taking the alternating sum of terms, is also an isomorphism.

We claim that the map $\varphi$ in the above diagram is given by composing the map from $\mathrm{K}_0(\fgmod{\underline{\Lambda}})$ to $\mathrm{K}_0(\fgmod{\Lambda})$ induced by the inclusion of categories with the inverse of the isomorphism $F_*\colon\mathrm{K}_0(\add_{\curly{E}}{T})\stackrel{\sim}{\to}\mathrm{K}_0(\fgmod{\Lambda})$. Since $\underline{\Lambda}$ is finite dimensional, the Grothendieck group $\mathrm{K}_0(\fgmod{\underline{\Lambda}})$ is spanned by the classes of the simple $\underline{\Lambda}$-modules $S_k$ for $1\leq k\leq r$, so it suffices to check that $\varphi$ acts on these classes as claimed. Let
\[0\to U_{k}^{*} \to Y_k \to U_{k} \to 0 \qquad \text{and} \qquad 0\to U_{k} \to X_k \to U_{k}^{*} \to 0\]
be the exchange sequences associated to the summand $U_k$ of $T$. Then there is an exact sequence
\[0\to FU_k\to FX_k\to FY_k\to FU_k\to S_k\to0.\]
From this we see that $[S_k]=[FX_k]-[FY_k]=F_*([X_k]-[Y_k])$ in $\mathrm{K}_0(\fgmod{\Lambda})$, and so we want to show that $\varphi[S_k]=[X_k]-[Y_k]$. On the other hand, $[S_k]$ is the image of the class of the $\curly{E}$-acyclic complex
\[\cdots\to0\to U_k\to X_k\to Y_k\to U_k\to0\to\cdots\]
under Palu's isomorphism $\mathrm{K}_0(\curly{H}^b_{\curly{E}\text{-ac}}(\add_{\curly{E}}{T}))\stackrel{\sim}{\to}\mathrm{K}_0(\fgmod{\underline{\Lambda}})$ (\cf \cite[Proof of Theorem~10]{Palu-Groth-gp}), and the image $\varphi[S_k]$ of this complex in $\mathrm{K}_0(\add_{\curly{E}}{T})$ is $[X_k]-[Y_k]$, as we wanted.  This yields \ref{t:grading-Groth-gp-relns}.

Now applying $\Hom{\integ}{-}{\abgp}$ to the exact sequence
\[\begin{tikzcd}[column sep=20pt]
\mathrm{K}_0(\fgmod{\underline{\Lambda}})\arrow{r}{\varphi}&\mathrm{K}_0(\add_{\curly{E}}{T})\arrow{r}&\mathrm{K}_0(\curly{E})\arrow{r}&0
\end{tikzcd}\]
shows that $\Hom{\integ}{\mathrm{K}_0(\curly{E})}{\abgp}$ is isomorphic to the kernel of $\varphi^t=\Hom{\integ}{\varphi}{\abgp}$, which we will show coincides with the space of gradings. Indeed, we may identify $\mathrm{K}_0(\add_{\curly{E}}{T})$ with $\mathrm{K}_0(\fgmod{\Lambda})$ via $F_*$, and then use the Euler form to identify $\mathrm{K}_0(\fd{\Lambda})\otimes_\integ\abgp$ with $\Hom{\integ}{\mathrm{K}_0(\fgmod{\Lambda})}{\abgp}$, the map
\[x\mapsto\ip{-}{x}_e\]
being an isomorphism as usual. Under this identification, we have $\varphi^tG=\ip{-}{G}_e|_{\mathrm{K}_0(\fgmod{\underline{\Lambda}})}$, and so $G\in\ker{\varphi}^t$ if and only if it is a grading.
The claim that the isomorphism is given explicitly by $G\mapsto\deg_G=\ip{F(-)}{G}_e$ can be seen by diagram chasing, and hence \ref{t:grading-Groth-gp-grading-space} is proved.
\end{proof}

The significance of this theorem is that, as in the triangulated case, it provides a basis-free method to identify gradings on Frobenius cluster categories and the cluster algebras they categorify.  In the latter context, basis-free essentially means free of the choice of a particular cluster.

Specifically, as explained in more detail below, to establish that some categorical datum gives a grading, one only needs to check that that it respects exact sequences.  This is potentially significantly easier than checking the vanishing of the product $B_{T}^t\underline{G}$ where $B_{T}$ is given in terms of dimensions of $\text{Ext}$-spaces over the endomorphism algebra $\Lambda$ of some cluster-tilting object $T$.

On the other hand, given some knowledge of the cluster algebra being categorified---in particular, knowing a seed---one can use the above theorem to deduce information about the Grothendieck group of the Frobenius cluster category.

As promised in Section~\ref{preliminaries}, we can use Theorem~\ref{t:grading-Groth-gp} to see how the grading in a graded Frobenius cluster category is independent of the cluster-tilting object. Precisely, let $(\curly{E},T,G)$ be a graded Frobenius cluster category, and let $\deg_G$ be the corresponding function on $\mathrm{K}_0(\curly{E})$. Let $T'=\bigoplus_{i=1}^nT_i'$ be another cluster-tilting object, with $\Lambda'=\op{\End{\curly{E}}{T'}}$, and denote the simple $\Lambda'$-modules by $S_i'$ for $1\leq i\leq n$. Using the inverse of the isomorphism of Theorem~\ref{t:grading-Groth-gp}, we see that if $G'$ in $\mathrm{K}_0(\fd\Lambda')$ is given by
\[G'=\sum_{i=1}^n\deg_G(T_i')[S_i'],\]
then $(\curly{E},T',G')$ is a graded Frobenius cluster category with $\deg_G=\deg_{G'}$, as one should expect. Note that this statement holds even if, as can happen, there is no sequence of mutations from $T$ to $T'$.

As was remarked about the triangulated case in \cite{GradedCAs}, these observations highlight how the categorification of a cluster algebra is able to see global properties, whereas the algebraic combinatorial mutation process is local.

The following example shows the theorem in action, although again we need the additional assumption of Hom-finiteness of $\curly{E}$.

\begin{lemma}\label{l:dim-vector} Assume that $\curly{E}$ is Hom-finite and let $P$ be a projective-injective object.  Then $\dim \Hom{\curly{E}}{P}{-}$ and $\dim\Hom{\curly{E}}{-}{P}$ define $\integ$-gradings for $\curly{E}$.
\end{lemma}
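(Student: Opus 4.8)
The plan is to deduce both statements directly from part~\ref{t:grading-Groth-gp-grading-space} of Theorem~\ref{t:grading-Groth-gp}, which identifies the space of $\integ$-gradings of $\curly{E}$ with $\Hom{\integ}{\mathrm{K}_0(\curly{E})}{\integ}$ via $G\mapsto\deg_G$. Under this identification, to show that a $\integ$-valued function on the objects of $\curly{E}$ is the degree function of a grading, it suffices to check that it descends to $\mathrm{K}_0(\curly{E})$, that is, that it is additive on short exact sequences. First I would note that the hypotheses of the theorem are met: since $\curly{E}$ is Hom-finite, $\Lambda=\op{\End{\curly{E}}{T}}$ is finite dimensional and hence Noetherian for any cluster-tilting object $T$ (one exists by definition of a Frobenius cluster category), and Hom-finiteness is exactly what guarantees that the two candidate functions take finite values in $\integ$.

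Next I would treat $\dim\Hom{\curly{E}}{P}{-}$. Because $P$ is projective in the exact category $\curly{E}$, the functor $\Hom{\curly{E}}{P}{-}$ carries any short exact sequence $0\to X\to Y\to Z\to 0$ to a short exact sequence of finite-dimensional vector spaces
\[0\to\Hom{\curly{E}}{P}{X}\to\Hom{\curly{E}}{P}{Y}\to\Hom{\curly{E}}{P}{Z}\to0.\]
Additivity of dimension then gives $\dim\Hom{\curly{E}}{P}{Y}=\dim\Hom{\curly{E}}{P}{X}+\dim\Hom{\curly{E}}{P}{Z}$, so the function factors through $\mathrm{K}_0(\curly{E})$ and, by the theorem, defines a $\integ$-grading.

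The function $\dim\Hom{\curly{E}}{-}{P}$ is handled by the dual argument: since $P$ is injective, the contravariant functor $\Hom{\curly{E}}{-}{P}$ sends $0\to X\to Y\to Z\to 0$ to a short exact sequence
\[0\to\Hom{\curly{E}}{Z}{P}\to\Hom{\curly{E}}{Y}{P}\to\Hom{\curly{E}}{X}{P}\to0,\]
again yielding additivity of dimensions on exact sequences and hence a grading. (Alternatively, this half is immediate from Lemma~\ref{l:proj-inj-grading}, which already exhibits $\dim\Hom{\curly{E}}{-}{P}$ as $\deg_{[FP]}$ for the explicit grading $[FP]$; I would mention this as a cross-check.)

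I do not anticipate a serious obstacle, the purpose of the statement being to illustrate Theorem~\ref{t:grading-Groth-gp} in action. The only point requiring care is the justification that projective (respectively injective) objects of an exact category yield Hom-functors that are exact on admissible short exact sequences—this is essentially the defining property of projectivity and injectivity in the exact-category sense—together with the observation that it is precisely these admissible sequences that generate the relations in $\mathrm{K}_0(\curly{E})$, so additivity on them is exactly what the theorem asks for.
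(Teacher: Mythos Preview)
Your proposal is correct and follows essentially the same approach as the paper: use projectivity (respectively injectivity) of $P$ to see that the relevant Hom-functor is exact, conclude that its dimension is additive on short exact sequences and hence defines an element of $\Hom{\integ}{\mathrm{K}_0(\curly{E})}{\integ}$, and then invoke Theorem~\ref{t:grading-Groth-gp}\ref{t:grading-Groth-gp-grading-space}. The paper's proof is simply a two-sentence version of what you wrote, without the cross-reference to Lemma~\ref{l:proj-inj-grading}.
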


\begin{proof} Since $P$ is projective and injective, both $\Hom{\curly{E}}{P}{-}$ and $\Hom{\curly{E}}{-}{P}$ are exact functors, and so in each case taking the dimension yields a function in $\Hom{\integ}{\mathrm{K}_0(\curly{E})}{\integ}$. Then the result follows immediately from Theorem~\ref{t:grading-Groth-gp}.
\end{proof}

In sufficiently nice cases, applying this result with a complete set of indecomposable projectives will yield that the dimension vector of a module is a (multi-)grading.  

However, we remark that some care may be needed regarding which algebra we measure ``dimension vector'' over.  If $\curly{E}\subset\fgmod{\Pi}$ for some algebra $\Pi$ (as in most examples), then we may consider the $\Pi$-dimension vector of $X\in\curly{E}$, defined in the usual way. On the other hand, any Hom-finite Frobenius cluster category $\curly{E}$ is equivalent to $\GP(B)\subset\fgmod{B}$ for $B$ the opposite endomorphism algebra of a basic projective generator $P=\bigoplus_{i=1}^nP_i$ of $\curly{E}$, by \cite[Theorem~2.7]{KIWY}.  Re-interpreting all of the objects of $\curly{E}$ as $B$-modules, the projective-injectives will now be precisely the projective $B$-modules, and $(\dim\Hom{\curly{E}}{P_i}{X})$ is the $B$-dimension vector of $X$ (tautologically, since the equivalence $\curly{E}\to\GP(B)$ takes $X$ to $\Hom{\curly{E}}{P}{X}$). Note that $B$ may not be the same as the algebra $\Pi$ from which $\curly{E}$ originated, and the $B$-dimension vector of a module may differ from the $\Pi$-dimension vector.

Given a complete set of projectives, it is natural to ask whether the associated grading might be standard, as defined in \cite{GradedCAs}; we briefly recall this definition and some related facts.

\begin{definition}  Let $(\underline{x},B)$ be a seed.  We call a multi-grading $G$ whose columns are a basis for the kernel of $B$ a standard multi-grading, and call $(\underline{x},B,G)$ a standard graded seed.
\end{definition}

It is straightforward to see, from rank considerations, that mutation preserves the property of being standard.  Moreover, as shown in \cite{GradedCAs}, if $(\underline{x},B,G)$ is a standard graded seed and $H$ is any grading for $(\underline{x},B)$, then there exists an integer matrix $M=M(G,H)$ such that for any cluster variable $y$ in $\curly{A}(\underline{x},B,H)$ we have 
\[ \deg_{H}(y)=\deg_{G}(y)M, \]
where on the right-hand side we regard $y$ as a cluster variable of $\curly{A}(\underline{x},B,G)$ in the obvious way.

That is, to describe the degree of a cluster variable of a graded cluster algebra $\curly{A}(\underline{x},B,H)$, it suffices to know its degree with respect to some standard grading $G$ and the matrix $M=M(G,H)$ transforming $G$ to $H$.  In particular, to understand the distribution of the degrees of cluster variables, it suffices to know this for standard gradings.

Since the statement applies in the particular case when $G$ and $H$ are both standard, we see that from one choice of basis for the kernel of $B$, we obtain complete information.  For if we chose a second basis, the change of basis matrix tells us how to transform the degrees.  Hence up to a change of basis, there is essentially only one standard grading for each seed.  

Then, depending on the particular Frobenius cluster category at hand, if we have knowledge of the rank of the exchange matrix, we may be able to examine categorical data such as the number of projective-injective modules or dimension vectors and hence try to find a basis for the space of gradings.  

For example, for a basic cluster-tilting object $T$ in $\curly{E}$ a Hom-finite Frobenius cluster category, we have $n-r$ projective-injective summands in $T$: if the exchange matrix $B_{T}$ has full rank, a basis for the space of gradings has size $n-r$ so that, via Lemma~\ref{l:proj-inj-grading}, a canonical standard grading is given by the set $\{ [FT_{i}] \mid i>r \}$, which is linearly independent since it is a subset of the basis of projectives for $\mathrm{K}_0(\fd\Lambda)=\mathrm{K}_0(\fgmod{\Lambda})$.

From knowledge of this standard grading, we then obtain any other grading by means of some linear transformation.  In the next section, we do this for two important examples.

\section{Examples of graded Frobenius cluster categories}

\subsection{Frobenius cluster categories associated to partial flag varieties}

Let $\mathfrak{g}$ be the Kac--Moody algebra associated to a symmetric generalised Cartan matrix.  Let $\Delta$ be the associated Dynkin graph and pick an orientation $\vec{\Delta}$. Let $Q$ be the quiver obtained from $\vec{\Delta}$ by adding an arrow $\alpha^*\colon j\to i$ for each arrow $\alpha\colon i\to j$ of $\vec{\Delta}$. Then the preprojective algebra of $\Delta$ is
\[\Pi=\complex Q/\sum_{\alpha\in\vec{\Delta}}[\alpha,\alpha^*],\]
which is, up to isomorphism, independent of the choice of orientation $\vec{\Delta}$.

For each $w\in W$, the Weyl group of $\mathfrak{g}$, Buan, Iyama, Reiten and Scott \cite{BIRS1} have introduced a category $\curly{C}_{w}$; the following version of its construction follows \cite{GLS-KacMoody}, and is dual to the original.

Assume $w$ has finite length and set $l(w)=n$; we do this for consistency with the notation used above but note that other authors (notably \cite{GLS-KacMoody}, \cite{GLS-QuantumPFV}) use $r$ and their $n$ is our $n-r$.

Set $\hat{I}_{i}$ to be the indecomposable injective $\Pi$-module with socle $S_{i}$, the 1-dimensional simple module supported at the vertex $i$ of $Q$.  

Given a module $W$ in $\fgmod \Pi$, we define
\begin{itemize}
\item $\mathrm{soc}_{(l)}(W)\defeq {\displaystyle \sum_{\substack{U\leq W \\ U\iso S_{l}}} U}$ and
\item $\mathrm{soc}_{(l_{1},l_{2},\ldots,l_{s})}(W)\defeq W_{s}$ where the chain of submodules $0=W_0\subseteq W_{1} \subseteq \cdots \subseteq W_{s} \subseteq W$ is such that $W_{p}/W_{p-1} \iso \mathrm{soc}_{(l_{p})}(W/W_{p-1})$.
\end{itemize}
Let $\mathbf{i}=(i_n,\dotsc,i_1)$ be a reduced expression for $w$. Then for $1\leq k \leq n$, we define $V_{\mathbf{i},s} \defeq \mathrm{soc}_{(i_{k},i_{s-1},\ldots,i_{1})}(\hat{I}_{i_{s}})$.  Set $V_{\mathbf{i}}=\bigdsum_{k=1}^{n} V_{\mathbf{i},k}$ and let $I$ be the subset of $\{ 1,\dotsc ,n\}$ such that the modules $V_{\mathbf{i},i}$ for $i\in I$ are $\curly{C}_{w}$-projective-injective.  Set $I_{\mathbf{i}}=\bigdsum_{i\in I} V_{\mathbf{i},i}$ and $n-r=\card{I}$.  Note that this is also the number of distinct simple reflections appearing in $\mathbf{i}$.

Define 
\[ \curly{C}_{\mathbf{i}}=\operatorname{Fac}(V_{\mathbf{i}})\subseteq \text{nil}\ \Pi. \]
That is, $\curly{C}_{\mathbf{i}}$ is the full subcategory of $\fgmod \Pi$ consisting of quotient modules of direct sums of finitely many copies of $V_{\mathbf{i}}$.

Then $\curly{C}_{\mathbf{i}}$ and $I_{\mathbf{i}}$ are independent of the choice of reduced expression $\mathbf{i}$ (although $V_{\mathbf{i}}$ is not), so that we may write $\curly{C}_{w}\defeq\curly{C}_{\mathbf{i}}$ and $I_w\defeq I_{\mathbf{i}}$. It is shown in \cite{BIRS1} that $\curly{C}_{w}$ is a stably 2-Calabi--Yau Frobenius category.  Moreover $\curly{C}_{w}$ has cluster-tilting objects: $V_{\mathbf{i}}$ is one such.  Indeed, cluster-tilting objects are maximal rigid, and vice versa.  The indecomposable $\curly{C}_{w}$-projective-injective modules are precisely the indecomposable summands of $I_{w}$, and $\curly{C}_{w}=\text{Fac}(I_{w})$.

Furthermore, it is also shown in \cite[Proposition~2.19]{GLS-KacMoody} that the global dimension condition of Definition~\ref{d:Frob-cl-cat} also holds, leaving only the Krull--Schmidt condition. By \cite[Corollary~4.4]{KrauseKS}, we should check that the endomorphism algebras of objects of $\curly{C}_w$ are semiperfect, and that this category is idempotent complete. The first of these properties holds since $\curly{C}_w$ is Hom-finite. The second follows from the fact that $\curly{C}_w$ is a full subcategory of the idempotent complete category $\fgmod(\Pi/\operatorname{Ann}{I_w})$, and that if $M$ is an object of $\operatorname{Fac}(I_w)$, then so are all direct summands of $M$.

We conclude that $\curly{C}_{w}$ is a Frobenius cluster category, in the sense of Definition~\ref{d:Frob-cl-cat}.

Let $\Lambda=\op{\End{\curly{C}_{w}}{V_{\mathbf{i}}}}$ and $F=\Hom{\curly{C}_{w}}{V_{\mathbf{i}}}{-}$.  Then, as above, the modules $P_{k}\defeq FV_{\mathbf{i},k}$ for $1\leq k\leq n$ are the indecomposable projective $\Lambda$-modules and the tops of these, $S_{k}$, are the simple $\Lambda$-modules.  Recall that the exchange matrix obtained from the quiver of $\Lambda$, which we shall call $B_{\mathbf{i}}$, has entries 
\[(B_{\mathbf{i}})_{ij}=\dim\Ext{1}{\Lambda}{S_i}{S_j}-\dim\Ext{1}{\Lambda}{S_j}{S_i}\]
for $1\leq i\leq n$ and $j\notin I$, so that the $r$ columns of $B_{\mathbf{i}}$ correspond to to the mutable summands $V_{\mathbf{i},j}$, $j\notin I$, of $V_{\mathbf{i}}$.

Let $L_{\mathbf{i}}$ be the $n\cross n$ matrix with entries 
\[ (L_{\mathbf{i}})_{jk}=\dim\Hom{\Pi}{V_{\mathbf{i},j}}{V_{\mathbf{i},k}}-\dim\Hom{\Pi}{V_{\mathbf{i},k}}{V_{\mathbf{i},j}}. \]

\noindent By \cite[Proposition~10.1]{GLS-QuantumPFV} we have 
\[ \sum_{l=1}^{n} (B_{\mathbf{i}})_{lk}(L_{\mathbf{i}})_{lj}=2\delta_{jk}, \]
and hence the matrix $B_{\mathbf{i}}$ has maximal rank, namely $r$.  

It follows that there exists some standard integer multi-grading $G_{\mathbf{i}}=(G_{1},\dotsc ,G_{n-r})\in \mathrm{K}_{0}(\fgmod{\Lambda})^{n-r}$ for $\curly{C}_{w}$ and $(\curly{C}_{w},V_{\mathbf{i}},G_{\mathbf{i}})$ is a graded Frobenius cluster category. As discussed above, such a standard grading can be used to construct all other gradings, so our goal is to identify one.

We have additional structure on $\curly{C}_{w}$ that we may make use of.  Namely, $\curly{C}_{w}$ is Hom-finite and we may apply Lemma~\ref{l:proj-inj-grading} with respect to the $\curly{C}_{w}$-projective-injective modules $V_{\mathbf{i},i}$ that are the indecomposable summands of $I_{\mathbf{i}}$.

The resulting grading $[FV_{\mathbf{i},i}]$, $i\in I$, is standard, since its $n-r$ components are a subset of the basis of projectives for $\mathrm{K}_0(\fgmod{\Lambda})$, and so in particular are linearly independent. By Theorem~\ref{t:grading-Groth-gp}, the existence of this standard grading implies that the Grothendieck group $\mathrm{K}_0(\curly{C}_w)$ has rank $n-r$.

We wish to understand this standard grading more explicitly.  Note that the objects of $\curly{C}_{w}$ are $\Pi$-modules and we may consider dimension vectors with respect to the $\Pi$-projective modules.

Then we notice that in fact the grading by $([FV_{\mathbf{i},i}])_{i\in I}$ is equal to the $\Pi$-dimension vector grading in the case at hand.  This is because, by Lemma~\ref{l:proj-inj-grading}, the degree of $X$ with respect to $[FV_{\mathbf{i},i}]$ is $\dim\Hom{\Pi}{X}{V_{\mathbf{i},i}}$, and each $V_{\mathbf{i},i}$ is both a submodule and a minimal right $\curly{C}_w$-approximation of an indecomposable injective $\hat{I}_{i}$ for $\Pi$, so $\Hom{\Pi}{X}{V_{\mathbf{i},i}}=\Hom{\Pi}{X}{\hat{I}_{i}}$, the dimensions of the latter giving the $\Pi$-dimension vector of $X$.

In \cite[Corollary~9.2]{GLS-KacMoody}, Gei\ss, Leclerc and Schr\"{o}er have shown that
\[ \dimvec_{\Pi} V_{\mathbf{i},k}=\omega_{i_{k}}-s_{i_{1}}s_{i_{2}}\dotsm s_{i_{k}}(\omega_{i_{k}})\]
for all $1\leq k\leq n$, where the $\omega_{j}$ are the fundamental weights for $\mathfrak{g}$ and the $s_{j}$ the Coxeter generators for $W$.  This enables us to construct the above grading purely combinatorially.

\begin{example}
We consider the following seed associated to $\mathfrak{g}$ of type $A_{5}$ with\[ \mathbf{i}=(3,2,1,4,3,2,5,4,3), \] as given in \cite[Example~12.11]{GLS-QuantumPFV}.  The modules $V_{k}\defeq V_{\mathbf{i},k}$, in terms of the usual representation illustrating their composition factors as $\Pi$-modules, are 

%\begin{center}
%\scalebox{1}{\input{initialseedofmodules.tikz}}
%\end{center}

\begin{align*}
V_1&=\begin{smallmatrix}3\end{smallmatrix}&
V_2&=\begin{smallmatrix}3\\&4\end{smallmatrix}&
V_3&=\begin{smallmatrix}3\\&4\\&&5\end{smallmatrix}\\\\
V_4&=\begin{smallmatrix}&3\\2\end{smallmatrix}&
V_5&=\begin{smallmatrix}&3\\2&&4\\&3\end{smallmatrix}&
V_6&=\begin{smallmatrix}&3\\2&&4\\&3&&5\\&&4\end{smallmatrix}\\\\
V_7&=\begin{smallmatrix}&&3\\&2\\1\end{smallmatrix}&
V_8&=\begin{smallmatrix}&&3\\&2&&4\\1&&3\\&2\end{smallmatrix}&
V_9&=\begin{smallmatrix}&&3\\&2&&4\\1&&3&&5\\&2&&4\\&&3\end{smallmatrix}
\end{align*}

The exchange quiver for this seed is

\begin{center}
\scalebox{1}{\begin{tikzpicture}[node distance=2cm,on grid,>=angle 90]

\node (11) at (0,0) {$V_{1}$}; 
\node (12) [right=of 11] {$V_{2}$};
\node (13) [right=of 12,rectangle,draw=black,thick] {$V_{3}$};

\node (21) [below=of 11] {$V_{4}$}; 
\node (22) [right=of 21] {$V_{5}$};
\node (23) [right=of 22,rectangle,draw=black,thick] {$V_{6}$};

\node (31) [below=of 21,rectangle,draw=black,thick] {$V_{7}$}; 
\node (32) [right=of 31,rectangle,draw=black,thick] {$V_{8}$};
\node (33) [right=of 32,rectangle,draw=black,thick] {$V_{9}$};

\draw[semithick,->] (11) to (12);
\draw[semithick,->] (12) to (13);

\draw[semithick,->] (21) to (22);
\draw[semithick,->] (22) to (23);

\draw[semithick,->] (11) to (21);
\draw[semithick,->] (21) to (31);

\draw[semithick,->] (12) to (22);
\draw[semithick,->] (22) to (32);

\draw[semithick,->] (22) to (11);
\draw[semithick,->] (23) to (12);

\draw[semithick,->] (32) to (21);
\draw[semithick,->] (33) to (22);

\end{tikzpicture}}
\end{center}

It is straightforward to see that $\Pi$-dimension vectors yield a grading: for example, looking at the vertex corresponding to $V_{1}$, the sums of the dimension vectors of incoming and outgoing arrows are $[0,1,2,1,0]$ and $[0,1,1,0,0]+[0,0,1,1,0]$ respectively.
\end{example}
 
\subsection{Grassmannian cluster categories}

Let $\Pi$ be the preprojective algebra of type $\mathsf{A}_{n-1}$, with vertices numbered sequentially, and let $Q_k$ be the injective module at the $k$th vertex. In \cite{GLS-PFV}, Gei\ss, Leclerc and Schr\"oer show that the category $\operatorname{Sub}\, Q_{k}$ of submodules of direct sums of copies of $Q_k$ ``almost'' categorifies the cluster algebra structure on the homogeneous coordinate ring of the Grassmannian of $k$-planes in $\complex^n$, but is missing a single indecomposable projective object corresponding to one of the frozen variables of this cluster algebra. The category $\Sub{Q_k}$ is in fact dual to one of the categories $\curly{C}_w$ introduced in the previous section, for $\Delta=\mathsf{A}_{n-1}$ and $w$ a particular Weyl group element depending on $k$, so it is a Frobenius cluster category in the same way.

Jensen, King and Su \cite{JKS} complete the categorification via the category $\CM(A)$ of maximal Cohen--Macaulay modules for a Gorenstein order $A$ (depending on $k$ and $n$) over $Z=\powser{\mathbb{C}}{t}$. One description of $A$ is as follows. Let $\Delta$ be the graph (of affine type $\tilde{\mathsf{A}}_{n-1}$) with vertex set given by the cyclic group $\integ_n$, and edges between vertices $i$ and $i+1$ for all $i$. Let $\Pi$ be the completion of the preprojective algebra on $\Delta$ with respect to the arrow ideal. Write $x$ for the sum of ``clockwise'' arrows $i\to i+1$, and $y$ for the sum of ``anti-clockwise'' arrows $i\to i-1$. Then we have
\[A=\Pi/\langle x^k-y^{n-k}\rangle.\]
In this description, $Z$ may be identified with the centre $\powser{\mathbb{C}}{xy}$ of $A$.

Jensen, King and Su also show \cite[Theorem~4.5]{JKS} that there is an exact functor \linebreak $\pi\colon \CM(A) \to \operatorname{Sub}\, Q_{k}$, corresponding to the quotient by the ideal generated by $P_{n}$, and that for any $N\in \operatorname{Sub}\, Q_{k}$, there is a unique (up to isomorphism) minimal $M$ in $\CM(A)$ with $\pi M\iso N$ and $M$ having no summand isomorphic to $P_{n}$.  Such an $M$ satisfies $\mathrm{rk}(M)=\dim \mathrm{soc}\ \pi M$, where $\mathrm{rk}(M)$ is the rank of each vertex component of $M$, thought of as a $Z$-module.

We now show that $\CM(A)$ is again a Frobenius cluster category. Properties of the algebra $A$ mean that an $A$-module is maximal Cohen--Macaulay if and only if it is  free and finitely generated as a $Z$-module. Since $Z$ is a principal ideal domain, and hence Noetherian, any submodule of a free and finitely generated $Z$-module is also free and finitely generated, and so $\CM(A)$ is closed under subobjects. In particular, $\CM(A)$ is closed under kernels of epimorphisms. Moreover \cite[Corollary~3.7]{JKS}, $A\in\CM(A)$, and so $\Omega(\fgmod{A})\subseteq\CM(A)$.

As a $Z$-module, any object $M\in\CM(A)$ is isomorphic to $Z^k$ for some $k$, so we have that $\op{\End{Z}{M}}\cong Z^{k^2}$ is a finitely generated $Z$-module. Since $Z$ is Noetherian, the algebra \linebreak $\op{\End{A}{M}}\subseteq\op{\End{Z}{M}}$ is also finitely generated as a $Z$-module. Thus $\op{\End{A}{M}}$ is Noetherian, as it is finitely generated as a module over the commutative Noetherian ring $Z$. We may now apply \cite[Proposition~3.6]{Pressland} to see that any cluster-tilting object $T\in\CM(A)$ satisfies $\operatorname{gldim}{\op{\End{A}{T}}}\leq 3$. Moreover \cite[Corollary~4.6]{JKS}, $\underline{\CM}(A)=\underline{\operatorname{Sub}}\,{Q_k}$, so $\underline{\CM}(A)$ is $2$-Calabi--Yau, and $\CM(A)$ is a Frobenius cluster category. 

Unlike $\operatorname{Sub}\, Q_{k}$ and the $\curly{C}_w$, the category $\CM(A)$ is not Hom-finite. However, as already observed, the endomorphism algebras of its objects are Noetherian, so we may apply our general theory to this example.

In their study of the category $\CM(A)$, Jensen, King and Su show the following.  Let
\[ \integ^{n}(k)=\{ x\in \integ^{n} \mid k\ \text{divides} \textstyle\sum_{i} x_{i} \} \] with basis $\alpha_{1},\dotsc ,\alpha_{n-1},\beta_{[n]}$, where the $\alpha_{j}=e_{j+1}-e_j$ are the negative simple roots for $\mathrm{GL}_{n}(\complex)$ and $\beta_{[n]}=e_{1}+\dotsm +e_{k}$ is the highest weight for the representation $\bigwedge^{k}(\complex^{n})$.  

Then by \cite[\S 8]{JKS} we have that $\mathrm{K}_{0}(\CM (A))\iso \mathrm{K}_{0}(A)\iso \integ^{n}(k)$; let $G\colon \mathrm{K}_{0}(\CM(A))\to \integ^{n}(k)$ denote the composition of these isomorphisms.  The $\mathrm{GL}_{n}(\complex)$-weight of the cluster character of $M\in \CM(A)$ (called $\tilde{\psi}_{M}$ in $\cite{JKS}$) is given by the coefficients in an expression for $G[M]\in\integ^{n}(k)$ in terms of the basis of $\integ^n(k)$ given above \cite[Proposition~9.3]{JKS}, and thus this weight defines a group homomorphism $\mathrm{K}_0(\CM(A))\to\integ^n$.

Said in the language of this paper, $\CM(A)$ is a graded Frobenius cluster category with respect to $\mathrm{GL}_{n}(\complex)$-weight, this giving a standard integer multi-grading.

Let $\delta\colon \integ^{n}(k)\to \integ$ be the (linear) function $\delta(x)=\frac{1}{k}\sum_{i} x_{i}$.  By the linearity of gradings, composing $G$ with $\delta$ yields a $\integ$-grading on $\CM (A)$ also.  Explicitly, $\delta(x)$ is the $\beta_{[n]}$-coefficient of $x$ in our chosen basis, and is also equal to the dimension of the socle of $\pi M$, which is equal to $\mathrm{rk}(M)$, which is equal to the degree of the cluster character of $M\in \CM(A)$ as a homogeneous polynomial in the Pl\"{u}cker coordinates of the Grassmannian.  

It is well known that the cluster structure on the Grassmannian is graded with respect to either the $\mathrm{GL}_{n}(\complex)$-weight (also called the content of a minor, and, by extension, of a product of minors) or the natural grading associated to the Pl\"{u}cker embedding.  The results of \cite{JKS} show that these gradings are indeed naturally reflected in the categorification of that cluster structure.  This opens the possibility of attacking some questions on, for example, the number of cluster variables of a given degree by examining rigid indecomposable modules in $\CM(A)$ of the corresponding rank, say.  We hope to return to this application in the future.

Of course, one can also argue directly that $\mathrm{rk}(M)$ yields a grading on $\CM (A)$, considering it as a function on $\mathrm{K}_{0}(\CM(A))$.  Note that the socle dimension of $\pi M$ is not a grading on $\operatorname{Sub}\, Q_{k}$, but rather it is the datum within $\operatorname{Sub}\, Q_{k}$ that specifies how one should lift $\pi M$ to $M$ (see \cite[\S 2]{JKS} for an illustration of this).  As described in the previous section, $\operatorname{Sub}\, Q_{k}$ (in its guise as one of the $\curly{C}_{w}$) does admit gradings, such as the grading describing the degree of the cluster character of $\pi M\in \operatorname{Sub}\, Q_{k}$ (called $\psi_{\pi M}$ in \cite{JKS}) with respect to the standard matrix generators.

\small

\bibliographystyle{halpha}
\bibliography{biblio}\label{references}

\normalsize

\end{document}